\documentclass[11pt]{amsart}
\usepackage[T1]{fontenc}
\usepackage[english]{babel}
\usepackage{amscd,amsmath,amsthm,amssymb,graphics}
\usepackage{lmodern,pst-node}
\usepackage{pstcol,pst-plot,pst-3d}
\usepackage{multicol}
\usepackage{epic,eepic}
\usepackage{amsfonts,amssymb,amscd,amsmath,enumitem,verbatim}
\psset{unit=0.7cm,linewidth=0.8pt,arrowsize=2.5pt 4}

\newpsstyle{fatline}{linewidth=1.5pt}
\newpsstyle{fyp}{fillstyle=solid,fillcolor=verylight}
\definecolor{verylight}{gray}{0.97}
\definecolor{light}{gray}{0.9}
\definecolor{medium}{gray}{0.85}
\definecolor{dark}{gray}{0.6}



\unitlength=0.7cm

%
%

%
%
\def\frk{\frak}               

\def\Phi{{\frk n}}
\def\Phi{{\frk N}}
%

%

%
\def\opn#1#2{\def#1{\operatorname{#2}}} 
%
\opn\chara{char} \opn\length{\ell} \opn\pd{pd} \opn\rk{rk}
\opn\projdim{proj\,dim} \opn\injdim{inj\,dim} \opn\rank{rank}
\opn\depth{depth} \opn\grade{grade} \opn\height{height}
\opn\embdim{emb\,dim} \opn\codim{codim}

\opn\Tr{Tr} \opn\bigrank{big\,rank}
\opn\superheight{superheight}\opn\lcm{lcm}
\opn\trdeg{tr\,deg}
\opn\reg{reg} \opn\lreg{lreg} \opn\ini{in} \opn\lpd{lpd}
\opn\size{size}\opn\bigsize{bigsize}
\opn\cosize{cosize}\opn\bigcosize{bigcosize}
\opn\sdepth{sdepth}\opn\sreg{sreg}
\opn\link{link}\opn\fdepth{fdepth}
\opn\deg{deg}
\opn\max{max}
\opn\indeg{indeg}
\opn\min{min}
\opn\psln{psln}
%
\opn\div{div} \opn\Div{Div} \opn\cl{cl} \opn\Cl{Cl}
%

\let\epsilon\varepsilon
\let\phi=\varphi
\let\kappa=\varkappa

%
\opn\Spec{Spec} \opn\Supp{Supp} \opn\supp{supp} \opn\Sing{Sing}
\opn\Ass{Ass} \opn\Min{Min}\opn\Mon{Mon} \opn\dstab{dstab} \opn\astab{astab}
\opn\Syz{Syz}
%
%
\opn\Ann{Ann} \opn\Rad{Rad} \opn\Soc{Soc}
%
%
\opn\Im{Im} \opn\Ker{Ker} \opn\Coker{Coker} \opn\Am{Am}
\opn\Hom{Hom} \opn\Tor{Tor} \opn\Ext{Ext} \opn\End{End}
\opn\Aut{Aut} \opn\id{id}

\opn\nat{nat}
\opn\pff{pf}
\opn\Pf{Pf} \opn\GL{GL} \opn\SL{SL} \opn\mod{mod} \opn\ord{ord}
\opn\Gin{Gin} \opn\Hilb{Hilb}\opn\sort{sort}
\opn\initial{init}
\opn\ende{end}
\opn\height{height}
\opn\depth{depth}
\opn\type{type}
\opn\ldim{ldim}

%
%
\opn\aff{aff} \opn\con{conv} \opn\relint{relint} \opn\st{st}
\opn\lk{lk} \opn\cn{cn} \opn\core{core} \opn\vol{vol}
\opn\link{link} \opn\star{star}\opn\lex{lex}
\opn\gr{gr}

%
%

\def\pot#1#2{#1[\kern-0.28ex[#2]\kern-0.28ex]}

%
%
\opn\dirlim{\underrightarrow{\lim}}
\opn\inivlim{\underleftarrow{\lim}}
%
%

%
%

\def\Implies{\ifmmode\Longrightarrow \else
        \unskip${}\Longrightarrow{}$\ignorespaces\fi}
\def\implies{\ifmmode\Rightarrow \else
        \unskip${}\Rightarrow{}$\ignorespaces\fi}
\def\iff{\ifmmode\Longleftrightarrow \else
        \unskip${}\Longleftrightarrow{}$\ignorespaces\fi}

\let\:=\colon
 \theoremstyle{plain}
\newtheorem{Theorem}{Theorem}[section]
 \newtheorem{Lemma}[Theorem]{Lemma}
 
 \newtheorem{Proposition}[Theorem]{Proposition}

 \newtheorem{Question}[Theorem]{Question}
 \theoremstyle{definition}
 \newtheorem{Definition}[Theorem]{Definition}

 \newtheorem{Example}[Theorem]{Example}

%
%
\let\epsilon\varepsilon
\let\kappa=\varkappa
%
%
\textwidth=15cm \textheight=22cm \topmargin=0.5cm
\oddsidemargin=0.5cm \evensidemargin=0.5cm \pagestyle{plain}
%
%
\def\qed{\ifhmode\textqed\fi
      \ifmmode\ifinner\quad\qedsymbol\else\dispqed\fi\fi}
\def\textqed{\unskip\nobreak\penalty50
       \hskip2em\hbox{}\nobreak\hfil\qedsymbol
       \parfillskip=0pt \finalhyphendemerits=0}
\def\dispqed{\rlap{\qquad\qedsymbol}}

%
\opn\dis{dis}
\def\pnt{{\raise0.5mm\hbox{\large\bf.}}}

\opn\Lex{Lex}


\begin{document}

\author[Mafi and Naderi]{ Amir Mafi and Dler Naderi}
\title{A note on linear resolution and polymatroidal ideals}

\address{Amir Mafi, Department of Mathematics, University Of Kurdistan, P.O. Box: 416, Sanandaj, Iran.}
\email{A\_Mafi@ipm.ir}
\address{Dler Naderi, Department of Mathematics, University of Kurdistan, P.O. Box: 416, Sanandaj,
Iran.}
\email{dler.naderi65@gmail.com}

\begin{abstract}
Let $R=K[x_1,...,x_n]$ be the polynomial ring in $n$ variables over a field $K$ and $I$ be a monomial ideal generated in degree $d$.
Bandari and Herzog conjectured that a monomial ideal $I$ is polymatroidal if and only if all its monomial localizations have a linear resolution.
In this paper we give an affirmative answer to the conjecture in the following cases: $(i)$ $\height(I)=n-1$; $(ii)$ $I$ contains at least $n-3$ pure powers of the variables $x_1^d,...,x_{n-3}^d$; $(iii)$ $I$ is a monomial ideal in at most four variables.
\end{abstract}

\subjclass[2010]{13A30, 13F20, 13C13, 13B30}
\keywords{Polymatroidal ideal, monomial ideal, linear resolution}

\maketitle
\section*{Introduction}
Throughout this paper we assume that $R=K[x_1,...,x_n]$ is the polynomial ring in $n$ variables over a field $K$, $\frak{m}=(x_1,...,x_n)$ the unique homogeneous maximal ideal, and $I$ a monomial ideal. We denote by $G(I)$ the unique minimal set of monomial generators of $I$. A polymatroidal ideal is a monomial ideal, generated in a single degree, satisfying the following conditions: For all monomials $u,v\in G(I)$ with $\deg_{x_i}(u)>\deg_{x_i}(v)$, there exists an index $j$ such that $\deg_{x_j}(v)>\deg_{x_j}(u)$ and $x_j(u/x_i)\in I$ (see \cite{HH} or \cite{HH1}). A squarefree polymatroidal ideal is called a matroidal ideal.

Three general properties of polymatroidal ideals are crucial: $(i)$ all powers of polymatroidal ideals are again polymatroidal \cite[Theorem 5.3]{CH}, $(ii)$
polymatroidal ideals have linear quotients \cite[Lemma 1.3]{HT}, which implies that they have linear resolutions, $(iii)$ localizations of polymatroidal ideals at monomial prime ideals are again polymatroidal \cite[Corollary 3.2]{HRV}. The monomial localization of a monomial ideal $I$ with respect to a monomial prime ideal $\frak{p}$ is the monomial ideal $I(\frak{p})$ which obtained from $I$ by substituting the variables $x_i\notin\frak{p}$ by $1$. The monomial localization $I(\frak{p})$  can also be described as the saturation $I:(\prod_{x_i\notin{\frak{p}}}x_i)^{\infty}$ and when $I$ is a squarefree monomial ideal we see that $I(\frak{p})=I:(\prod_{x_i\notin{\frak{p}}}x_i)$. Also, we denote by $R(\frak{p})$ the polynomial ring over $K$ in the variables which belong to $\frak{p}$ and we denote $\frak{m}_{\frak{p}}$ the graded maximal ideal of $R(\frak{p})$.

Bandari and Herzog \cite[Conjecture 2.9]{BH} conjectured that the monomial ideals with the property that all monomial localizations have a linear resolution are precisely the polymatroidal ideal. They gave an affirmative answer to the conjecture in the following cases: $(i)$ $I$ is a squarefree monomial ideal; $(ii)$ $I$ is generated in degree $2$; $(iii)$ $I$ contains at least $n-1$ pure powers; $(iv)$ $I$ is a monomial ideal in at most three variables; $(v)$ $I$ has no embedded prime ideal and $\height(I)=n-1$.

Herzog, Hibi and Zheng \cite[Theorem 3.2]{HHZ} proved that if $I$ is a monomial ideal generated in degree $2$, then $I$ has a linear resolution if and only if each power of $I$ has a linear resolution. Sturmfels \cite{S} gave an example $I=(def,cef,cdf,cde,bef,bcd,acf,ade)$ with $I$ has a linear resolution while $I^2$ has no linear resolution (see also \cite{C}). This suggests the following question: Is it true that each power of $I$  has a linear resolution, if $I$ is a squarefree monomial ideal of degree $d$ with $I^k$ has a linear resolution for all $1\leq k\leq d-1$?

In this paper we give a counterexample to this question and also we give an affirmative answer to Bandari-Herzog's conjecture in the following cases:
$(i)$  $\height(I)=n-1$; (ii) $I$ contains at least $n-3$ pure powers of the variables $x_1^d,...,x_{n-3}^d$; (iii) $I$ is monomial ideal in at most four variables.

For any unexplained notion or terminology, we refer the reader to \cite{HH1}.
Several explicit examples were  performed with help of the computer algebra systems Macaulay2 \cite{GS}.

\section{Monomial localizations of polymatroidal ideals}
It is known that if $I\subset R$ is an $\frak{m}$-primary monomial ideal which has a linear resolution, then there exists a positive integer $k$ such that
$I={\frak{m}}^k$. Also, it is known that $I=uJ$, where $u$ is the greatest common divisor of the generator of $I$, is a linear resolution if and only if $J$ a linear resolution. Hence if $I\subset R=K[x_1,x_2]$ is a linear resolution, then $I$ is principal ideal or $I=uJ$. If $I=uJ$, then $J$ has a linear resolution and $\frak{m}$-primary monomial ideal. Thus if $I$ is principal or $I=uJ$, then $I$ is a polymatroidal ideal. Hence $I\subset R=K[x_1,x_2]$ is polymatroidal if and only if $I$ is a linear resolution.

For a monomial ideal $I$ of $R$ and $G(I)=\{u_1,...,u_t\}$, we set $\supp(I)=\cup_{i=1}^t\supp(u_i)$, where $\supp(u)=\{x_i: u=x_1^{a_1}...x_n^{a_n}, a_i\neq 0\}$ and we set $\gcd(I)=\gcd(u_1,...,u_m)$. We say that the monomial ideal $I$ is full-supported if $\supp(I)=\{x_1,...,x_n\}$.

In the sequel we recall the following definitions from \cite[Definition 8]{T}, \cite[Definition 2.5]{KM} and \cite[Definition 2.3]{BH}.

\begin{Definition}
Let $I\subset R$ be a monomial ideal and let $x^t[i]$ denote the monomial $x_1^{t_1}...\widehat{x_i^{t_i}}...x_n^{t_n}$, where the term $x_i^{t_i}$ of $x^t$ is omitted. For each $i=1,...,n$, we put $I[i]=(I:x_i^{\infty})$.
\end{Definition}

\begin{Definition}
Let $d, a_1,..., a_n$ be positive integers. We put $I_{(d;a_1,...,a_n)}\subset R$ be the monomial ideal generated by the monomials $u\in R$ of degree $d$ satisfying $\deg_{x_i}(u)\leq a_i$ for all $i=1,...,n$. Monomial ideals of this type are called ideals of Veronese type.
Monomial ideals of Veronese type are polymatroidal.
\end{Definition}

\begin{Lemma}\label{L}
Let $I\subset R=K[x_1,x_2,x_3]$ be a squarefree monomial ideal. Then $I$ has a linear resolution if and only if $I$ is a matroidal ideal.
\end{Lemma}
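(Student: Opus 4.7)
The plan is to handle the two directions separately. The ``if'' direction is immediate from property (ii) recalled in the introduction: every polymatroidal ideal has linear quotients and hence a linear resolution, and a matroidal ideal is by definition squarefree polymatroidal.

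For the converse, suppose $I \subset R = K[x_1,x_2,x_3]$ is squarefree with a linear resolution, so generated in a single degree $d$. The first step is to factor out the common gcd: write $I = uJ$ with $u = \gcd(I)$, so $u$ is squarefree, $J$ is squarefree with $\gcd(J) = 1$, and the generators of $J$ are coprime to $u$. By the observation in the opening paragraph of Section~1, $J$ also has a linear resolution. Moreover, a short check on the exchange condition shows that if $J$ is matroidal and $u$ is a squarefree monomial coprime to every generator of $J$, then $uJ$ is again matroidal. Thus it suffices to show that $J$ itself is matroidal.

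Next I would run a case analysis on $k := |\supp(J)| \leq 3$ together with the common degree $d'$ of the generators of $J$. The cases $k \leq 1$ are immediate or vacuous: if $k = 0$ then $J$ is the unit ideal and $I$ is principal; if $k = 1$ the only full-supported squarefree ideal in one variable is $(x_i)$, violating $\gcd(J) = 1$. For $k = 2$, single-degree generation with $\gcd(J) = 1$ and full support forces $J = (x_i,x_j)$, which is matroidal. For $k = 3$, enumerating the squarefree monomials in each degree of $K[x_1,x_2,x_3]$ and imposing full support together with $\gcd(J) = 1$ leaves only
\[
J = \mm = (x_1,x_2,x_3) \ \ (d' = 1), \qquad J = (x_1x_2,\, x_1x_3,\, x_2x_3) \ \ (d' = 2);
\]
the case $d' = 3$ would force $J = (x_1x_2x_3)$, contradicting $\gcd(J) = 1$. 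Both surviving ideals are of Veronese type $I_{(d';1,1,1)}$, hence matroidal by Definition~1.2, and we conclude that $I = uJ$ is matroidal.

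There is no substantive obstacle: after the gcd reduction the ambient three-variable polynomial ring is small enough that direct enumeration finishes the proof, and the linear-resolution hypothesis is used only to ensure single-degree generation, not its deeper homological content. The single delicate point is verifying that multiplying a matroidal ideal by a coprime squarefree monomial preserves the exchange condition, and this is a one-line check on degrees.
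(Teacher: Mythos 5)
Your argument is correct, but it runs along a genuinely different track than the paper's. The paper also begins with the reduction to $\gcd(I)=1$, but then it argues homologically: from $\height(I)\geq 2$ it identifies the saturations $I[i]=(I:x_i^{\infty})$ as the primes $(x_j,x_k)$, and invokes the structure result \cite[Proposition 2.11]{BH} (reproved in this paper as Proposition \ref{P.2}), which genuinely uses the $d$-linear resolution of $I$ (via the vanishing coming from \cite[Corollary 20.19]{E}) to conclude that $I$ is a squarefree Veronese ideal. You instead use the linear-resolution hypothesis only through single-degree generation, and finish by brute-force enumeration of all squarefree ideals with trivial gcd generated in one degree in at most three variables, together with the easy check that multiplying a matroidal ideal by a coprime squarefree monomial preserves matroidality; all your case checks are accurate (for $d'=2$ any two of $x_1x_2,x_1x_3,x_2x_3$ share a variable, so $\gcd=1$ forces the full squarefree Veronese ideal), so the proof closes. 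What each route buys: your enumeration is more elementary and in fact proves the formally stronger statement that \emph{every} squarefree monomial ideal of $K[x_1,x_2,x_3]$ generated in a single degree is matroidal, making the equivalence with having a linear resolution almost combinatorially trivial in three variables; the paper's route via $I[i]$ and Proposition \ref{P.2} does not scale down to a simpler statement but is the template that the authors reuse for the non-squarefree and four-variable results later in the paper, which is why they prove the lemma that way. One cosmetic point: in your $k=0$ case you should say explicitly that a principal (squarefree) monomial ideal is matroidal because the exchange condition is vacuous, but that is a one-line remark, not a gap.
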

\begin{proof}
If $I$ is a matroidal ideal, then it is clear $I$ has a linear resolution.
Conversely, suppose $I$ has a linear resolution and as above explanation we can assume that $\gcd(I)=1$. Therefore $\height(I)\geq 2$ and so $\height(I[i])\geq 2$ for $i=1,2,3$. Thus $I[1]=(x_2,x_3)$, $I[2]=(x_1,x_3)$ and $I[3]=(x_1,x_2)$ and so by \cite[Proposition 2.11]{BH}, I is a squarefree Veronese type ideal.
\end{proof}

The following example shows that the squarefree condition over $I$ in Lemma \ref{L} is essential.
For its proof we use the following well-known fact: a monomial ideal $I$ of degree $d$ is a linear resolution if and only if $\reg(I)=d$.
\begin{Example}
Let $I=(x_1x_2,x_1x_3,x_2^2)$ be an ideal of $R=k[x_1,x_2,x_3]$. Then $\reg(I)=2$ and so $I$ has a linear resolution but $I$ is not polymatroidal.
\end{Example}

The following result was proved in \cite[Proposition 2.7]{BH}. We reprove with a simplified proof.
\begin{Proposition}
Let $I\subset R=K[x_1,x_2,x_3]$ be a monomial ideal. Then $I$ is polymatroidal if and only if $I$ and $I[i]$ have a linear resolution for $i=1,2,3$.
\end{Proposition}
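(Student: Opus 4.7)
The forward direction follows immediately from properties (ii) and (iii) recalled in the introduction: if $I$ is polymatroidal, then (ii) gives that $I$ itself has a linear resolution, while each $I[i]$ coincides (up to the obvious flat polynomial extension) with the monomial localization $I(\mathfrak{p}_i)$ at the prime $\mathfrak{p}_i=(x_j\colon j\neq i)$, so (iii) forces $I[i]$ to be polymatroidal, and in particular to have a linear resolution.

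For the converse, assume $I$ and all three $I[i]$ have a linear resolution. Writing $I=uJ$ with $u=\gcd(I)$, polymatroidality of $I$ is equivalent to polymatroidality of $J$, and the linear-resolution hypotheses pass to $J$ and to each $J[i]$; so one may reduce to $\gcd(I)=1$. If $\lvert\supp(I)\rvert\leq 2$, the opening discussion of this section already gives that $I$ is polymatroidal. So we may assume $\supp(I)=\{x_1,x_2,x_3\}$, and then $\height(I)\geq 2$. In the subcase $\height(I)=3$, $I$ is $\mathfrak{m}$-primary with a linear resolution and hence equals $\mathfrak{m}^d$, which is polymatroidal.

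The heart of the argument is the remaining case $\height(I)=2$. Proceeding as in Lemma~\ref{L}, I would first observe that $\gcd(I[i])=1$ in the two-variable ring $K[x_j\colon j\neq i]$: any variable $x_j$ ($j\neq i$) dividing $\gcd(I[i])$ would have to divide $\gcd(I)=1$. Combining this with the hypothesis that $I[i]$ has a linear resolution and lives in only two variables, the two-variable preamble of this section forces each $I[i]$ to be either the unit ideal (which happens exactly when the pure power $x_i^d$ lies in $G(I)$, where $d$ is the common degree of the generators of $I$) or a power $\mathfrak{n}_i^{k_i}$ of the graded maximal ideal $\mathfrak{n}_i=(x_j\colon j\neq i)$.

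With this explicit description of all three $I[i]$ in hand, the plan is to conclude by invoking \cite[Proposition~2.11]{BH}, which in the squarefree setting of Lemma~\ref{L} played the same role and produced the Veronese-type structure $I_{(d;a_1,a_2,a_3)}$ with $a_i:=\max_{u\in G(I)}\deg_{x_i}(u)$. The step I expect to be the main obstacle is precisely this final matching: unlike the squarefree case, the exponents $k_i$ now interact nontrivially with the possible presence of the pure powers $x_i^d$, and one must use the linear resolution of $I$ itself (equivalently $\reg(I)=d$) to rule out non-Veronese configurations before \cite[Proposition~2.11]{BH} can be brought to bear.
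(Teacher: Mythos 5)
Your converse follows exactly the paper's route (reduce to $\gcd(I)=1$, identify each $I[i]$ as the unit ideal or a power $\frak{n}_i^{k_i}$ of the two-variable maximal ideal, then invoke \cite[Proposition 2.11]{BH}), but you stop precisely where the proof has to be closed: you only announce ``the plan'' to apply \cite[Proposition 2.11]{BH} and flag the identification of the exponents as an expected obstacle needing further input from $\reg(I)=d$ to ``rule out non-Veronese configurations.'' As written this is a genuine gap, because that proposition (reproved in this paper as Proposition \ref{P.2}) requires the exact equalities $I(\frak{p}_{\{i\}})=\frak{p}_{\{i\}}^{\,d-a_i}$ for all $i$, and your proposal never verifies $k_i=d-a_i$ nor supplies the extra argument it anticipates.

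In fact no extra argument is needed; the matching is automatic from your own setup. The ideal $I[i]$ is generated by the monomials $u/x_i^{\deg_{x_i}(u)}$ with $u\in G(I)$; all of these have degree at least $d-a_i$, and the minimum $d-a_i$ is attained by any generator realizing $a_i$. Hence if $I[i]=\frak{n}_i^{k_i}$ with $k_i\geq 1$, the degree-$(d-a_i)$ element lying in $\frak{n}_i^{k_i}$ gives $k_i\leq d-a_i$, while each degree-$k_i$ generator of $\frak{n}_i^{k_i}$ must be divisible by one of the listed generators, giving $k_i\geq d-a_i$; and in the pure-power case $x_i^d\in I$ one has $a_i=d$ and $I[i]=R=\frak{n}_i^{d-a_i}$ trivially. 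So the hypothesis of Proposition \ref{P.2} holds in every case, and it yields $I=I_{(d;a_1,a_2,a_3)}$, a Veronese type ideal, hence polymatroidal; the only property of $I$ itself that enters is the assumed $d$-linear resolution, exactly as in the paper's one-line conclusion. Your preliminary case distinctions ($\lvert\supp(I)\rvert\leq 2$ and $\height(I)=3$) are harmless but unnecessary, since the same argument covers them uniformly.
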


\begin{proof}
If $I$ is a polymatroidal ideal, then by \cite[Theorem 1.1]{BH} $I[i]$ is polymatroidal for $i=1,2,3$ and so they have a linear resolution.
Conversely, we can assume that $\gcd(I)=1$ and so $\height(I)\geq 2$. Thus $\height(I[i])\geq 2$ and so $I[i]$ is a Veronese ideal for $i=1,2,3$ in polynomial ring with two variables. Therefore by \cite[Proposition 2.11]{BH} $I$ is a Veronese type ideal.
\end{proof}

Following Bandari-Herzog \cite{BH}, the set of monomial prime ideals of $R=K[x_1,...,x_n]$ denote by $\mathcal{P}(R)$.
Let $\frak{p}\in\mathcal{P}(R)$ be a monomial prime ideal. Then $\frak{p}=\frak{p}_A$ for some subset $A\subseteq [n]$, where $\frak{p}_A=\lbrace x_i: i\notin A\rbrace$.
From now on, throughout the article, we will denote $a_i=\max\lbrace\deg_{x_i}(u) : u\in G(I)\rbrace$ for $i= 1,...,n$.

\begin{Proposition}\label{P}
Let $I\subset R=K[x_1,x_2,x_3,x_4]$ be a squarefree monomial ideal of degree $d$. Then $I$ is matroidal if and only if $I$ and $I[i]$ have a linear resolution for $i=1,2,3,4$.
\end{Proposition}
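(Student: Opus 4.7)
\smallskip
\noindent\emph{Plan.} The forward implication will follow immediately from \cite[Theorem~1.1]{BH}: every monomial localization of a (poly)matroidal ideal is again (poly)matroidal and hence has a linear resolution; specializing to $I(\mathfrak{p})$ with $\mathfrak{p}=(x_j:j\neq i)$ gives the statement for each $I[i]$.

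For the converse, I would proceed with the same reductions as in the preceding proposition. First I would assume $\gcd(I)=1$, since factoring out the gcd preserves both matroidality and the linear-resolution hypotheses on $I$ and the $I[i]$. Next, if $\supp(I)\subsetneq\{x_1,x_2,x_3,x_4\}$ then $I$ is a squarefree monomial ideal with a linear resolution living in at most three variables, and Lemma~\ref{L} already concludes. So I further assume $\supp(I)=\{x_1,x_2,x_3,x_4\}$; under this assumption each $I[i]$ is a squarefree monomial ideal with linear resolution in the three-variable subring $K[x_1,\ldots,\widehat{x_i},\ldots,x_4]$, and Lemma~\ref{L} yields that each $I[i]$ is matroidal.

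The plan for the final step is to bootstrap to case~(i) of the Bandari--Herzog conjecture (the squarefree case), which is proved in \cite{BH}. For any monomial prime $\mathfrak{p}\in\mathcal{P}(R)$, writing $\{i_1,\ldots,i_k\}$ for the set of indices with $x_{i_j}\notin\mathfrak{p}$, one has $I(\mathfrak{p})=(\cdots((I[i_1])[i_2])\cdots)[i_k]$, so the localization is obtained by iterated single-variable contraction. Since each $I[i]$ is matroidal, iterated application of \cite[Theorem~1.1]{BH} will show that every $I(\mathfrak{p})$ is matroidal and therefore has a linear resolution; the squarefree case of the Bandari--Herzog conjecture then yields that $I$ is matroidal.

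The hard part, and the reason the four-variable case is more subtle than the three-variable one, is that matroidality of each $I[i]$ no longer forces $I$ itself to be of (squarefree) Veronese type in four variables---witness the edge ideal of a $4$-cycle, which is matroidal but not squarefree Veronese type---so the direct classification argument using \cite[Proposition~2.11]{BH} employed in the preceding proposition is not available. The bootstrap through the squarefree case of the Bandari--Herzog conjecture sidesteps this issue, and the essential new input is Lemma~\ref{L}, which provides matroidality of the three-variable contractions $I[i]$ for free.
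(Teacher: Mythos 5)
Your proof is correct, but it follows a different route from the paper's. The paper splits the converse by degree: for $d\geq 3$ it shows each $I[i]$ is of squarefree Veronese type (via the proof of Lemma~\ref{L}) and then classifies $I$ itself as squarefree Veronese type using \cite[Proposition 2.11]{BH}, while only for $d=2$ does it pass to the statement that all monomial localizations $I(\frak{p})$ have linear resolutions and invoke the squarefree case of the conjecture, \cite[Corollary 1.2]{BH}. You instead run that bootstrap uniformly in all degrees: Lemma~\ref{L} gives that each $I[i]$ is matroidal, every proper monomial localization of $I$ is an iterated contraction and hence a monomial localization of some matroidal $I[i]$, so it is matroidal and has a linear resolution by \cite[Theorem 1.1]{BH} (equivalently \cite[Corollary 3.2]{HRV}), and then \cite[Corollary 1.2]{BH} applied to $I$ finishes. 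This avoids both the case split and the Veronese-type classification, and your justification that all localizations have linear resolutions is actually more explicit than the paper's terse assertion in its $d=2$ case; what the paper's route buys in exchange is the sharper structural conclusion that for $d\geq 3$ the ideal is of squarefree Veronese type, not merely matroidal. Your correctly noted obstruction (e.g.\ the edge ideal of the $4$-cycle is matroidal but not of Veronese type in degree $2$) is exactly why the paper cannot use the classification argument in degree $2$ and must switch methods there, whereas your argument never needs to.
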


\begin{proof}
If $I$ is a matroidal ideal, then by \cite[Corollary 1.2]{BH} $I[i]$ is matroidal for $i=1,2,3,4$ and so they have a linear resolution.
Conversely, we can assume that $\gcd(I)=1$ and so $\gcd(I[i])=1$ for $i=1,2,3,4$. Let $d\geq 3$. Since $\gcd(I[i])=1$, then $I[i]$ is a squarefree monomial ideal in polynomial ring with three variables. Therefore,  by the proof of Lemma \ref{L}, $I[i]$ is a squarefree Veronese type ideal for $i=1,2,3,4$.
Thus, in this case by using \cite[Proposition 2.11]{BH} $I$ is a squarefree Veronese type ideal.
Now, suppose that $d=2$. Since $I$ and $I[i]$ have a linear resolution for $i=1,2,3,4$ hence, for all $\frak{p}\in\mathcal{P}(R)$, $I(\frak{p})$ has a linear resolution. Therefore by using \cite[Corollary 1.2]{BH} $I$ is matroidal. This completes the proof.
\end{proof}

The following example shows that the squarefree condition over $I$ in Proposition \ref{P} is essential.
\begin{Example}
Let $I=(x_1^3,x_1^2x_2,x_1^2x_3,x_2x_3x_4,x_1x_2x_3,x_1x_3x_4,x_1^2x_4)$ be an ideal of $R=k[x_1,x_2,x_3,x_4]$. Then $I$ and $I[i]$ have a linear resolution for $i=1,2,3,4$, while $I$ is not polymatroidal.
\end{Example}
The following example shows that Proposition \ref{P} can not be extend for the polynomial ring with five variables.
\begin{Example}
Let $I=(x_1x_3x_5,x_1x_2x_3,x_1x_2x_4,x_2x_3x_4,x_3x_4x_5,x_2x_4x_5)$ be an ideal of $R=k[x_1,x_2,x_3,x_4,x_5]$. Then $\reg(I)=3$, $I[i]$ is a squarefree monomial ideal of degree $2$ and $\reg(I[i])=2$ for $i=1,2,3,4,5$. Therefore $I$ and $I[i]$ have a linear resolution for $i=1,2,3,4,5$, while $I$ is not matroidal.
\end{Example}

We recall the following definition and result of Herzog-Vladoiu \cite[Theorems 1.1 ~ and~1.3]{HV} which is useful in the sequel of this paper.
\begin{Definition}
A monomial ideal $I$ is {\it intersection type} if $I$ is an intersection of powers of monomial prime ideals.
Also, a monomial ideal $I$ is {\it strong intersection type} if $I$ satisfying the equivalent conditions of the next theorem.
\end{Definition}
\begin{Theorem}\label{P.1}
Let $I$ be a monomial ideal of intersection type with presentation $I=\cap_{{{\frak{p}}}\in\Ass(R/I)}{{\frak{p}}}^{d_{\frak{p}}}$. Then the following statements hold:
\begin{enumerate}
\item[(a)] $d_{\frak{p}}\leq\reg(I({ {\frak{p}}}))$ for all ${\frak{p}}\in\Ass(R/I)$.
\item[(b)] $d_{\frak{p}}=\reg(I({\frak{p}}))$ if and only if $I({\frak{p}})$ has a linear resolution.
\end{enumerate}
In particular, $I=\cap_{{{\frak{p}}}\in\Ass(R/I)}{{\frak{p}}}^{\reg(I({{\frak{p}}}))}$ if and only if $I({\frak{p}})$ has a linear resolution for all ${\frak{p}} \in\Ass(R/I)$.
\end{Theorem}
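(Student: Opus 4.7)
The plan is to localize at a fixed associated prime $\frak{p}$ and reduce everything to an $\frak{m}_{\frak{p}}$-primary computation in the smaller polynomial ring $R(\frak{p})$. The key observation is that monomial localization commutes with intersections of monomial prime powers: for each $\frak{q}\in\Ass(R/I)$, one has $\frak{q}(\frak{p})=\frak{q}$ when $\frak{q}\subseteq\frak{p}$ and $\frak{q}(\frak{p})=R(\frak{p})$ otherwise, and in the first case $\frak{q}^{d_{\frak{q}}}(\frak{p})=\frak{q}^{d_{\frak{q}}}$. Applied to the given presentation this yields
\[
I(\frak{p})=\bigcap_{\frak{q}\in\Ass(R/I),\;\frak{q}\subseteq\frak{p}}\frak{q}^{d_{\frak{q}}}
\]
in $R(\frak{p})$. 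In particular, singling out the factor $\frak{q}=\frak{p}$ and using $\frak{p}(\frak{p})=\frak{m}_{\frak{p}}$ gives the crucial containment $I(\frak{p})\subseteq\frak{m}_{\frak{p}}^{d_{\frak{p}}}$.

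Part (a) is then immediate: every minimal generator of $I(\frak{p})$ lies in $\frak{m}_{\frak{p}}^{d_{\frak{p}}}$, so $\indeg(I(\frak{p}))\geq d_{\frak{p}}$, and since regularity always dominates the initial degree we conclude $\reg(I(\frak{p}))\geq d_{\frak{p}}$.

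For part (b) I would handle the two directions separately. Assuming $d_{\frak{p}}=\reg(I(\frak{p}))$, the chain $d_{\frak{p}}\leq\indeg(I(\frak{p}))\leq\reg(I(\frak{p}))=d_{\frak{p}}$ from (a) collapses to equalities, so every generator of $I(\frak{p})$ sits in degree $d_{\frak{p}}$ and the regularity matches the generating degree, giving a linear resolution. Conversely, if $I(\frak{p})$ has a linear resolution then, since $\frak{p}\in\Ass(R/I)$ forces $I(\frak{p})$ to be $\frak{m}_{\frak{p}}$-primary, the remark opening Section~1 forces $I(\frak{p})=\frak{m}_{\frak{p}}^{k}$ with $k=\reg(I(\frak{p}))$. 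The containment $I(\frak{p})\subseteq\frak{m}_{\frak{p}}^{d_{\frak{p}}}$ yields $k\geq d_{\frak{p}}$, and the reverse inequality comes from the fact that the presentation uses the associated primes: irredundancy at $\frak{p}$ produces a generator $u\in G(I)$ with $\sum_{x_i\in\frak{p}}\deg_{x_i}(u)=d_{\frak{p}}$, and localizing $u$ gives an element of $I(\frak{p})$ of degree $d_{\frak{p}}$, forcing $k\leq d_{\frak{p}}$.

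The final ``in particular'' assertion is then a clean consequence: the identity $I=\bigcap_{\frak{p}\in\Ass(R/I)}\frak{p}^{\reg(I(\frak{p}))}$ holds if and only if every exponent in the given presentation already equals $\reg(I(\frak{p}))$, which by (b) is equivalent to every $I(\frak{p})$ having a linear resolution. The delicate step I expect to be the main obstacle is the reverse inequality $k\leq d_{\frak{p}}$ in the $(\Leftarrow)$ half of (b): it is precisely here that one uses the rigidity of the presentation at associated primes—without the fact that each $d_{\frak{p}}$ is the maximal $d$ with $I\subseteq\frak{p}^{d}$, the intersection could be tightened and the conclusion would fail.
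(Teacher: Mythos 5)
First, note that the paper does not prove this theorem at all: it is recalled verbatim from Herzog--Vladoiu \cite[Theorems 1.1 and 1.3]{HV}, so the only thing to assess is whether your argument is correct. Your localization step, part (a), and the direction ``$d_{\frak{p}}=\reg(I(\frak{p}))\Rightarrow$ linear resolution'' are fine. The gap is in the converse half of (b). The assertion that $\frak{p}\in\Ass(R/I)$ forces $I(\frak{p})$ to be $\frak{m}_{\frak{p}}$-primary is false whenever $\frak{p}$ is an embedded prime, i.e.\ properly contains another associated prime: then $I(\frak{p})=\bigcap_{\frak{q}\subseteq\frak{p}}\frak{q}^{d_{\frak{q}}}$ has those smaller primes among its associated primes as well. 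For instance $I=(x_1)\cap(x_1,x_2)^2=(x_1^2,x_1x_2)$ with $\frak{p}=\frak{m}$: here $I(\frak{m})=I$ has a linear resolution but is not $\frak{m}$-primary and is not a power of $\frak{m}$, so your deduction $I(\frak{p})=\frak{m}_{\frak{p}}^{k}$ collapses. This is not a marginal case: all the applications in the paper (Propositions \ref{P.3}, \ref{P.6} and Theorems \ref{P.8}, \ref{P.9}) use the theorem precisely at embedded primes such as $\frak{m}$. Your fallback step, that ``irredundancy at $\frak{p}$ produces a generator $u\in G(I)$ with $\frak{p}$-degree exactly $d_{\frak{p}}$,'' is also not available: irredundancy (or maximality of $d_{\frak{p}}$) is not a hypothesis of the statement, and the existence of such a generator is essentially equivalent to the inequality you are trying to prove, so it cannot simply be asserted.

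The missing ingredient is the equality $\indeg(I(\frak{p}))=d_{\frak{p}}$, which one proves from the associated-prime hypothesis rather than from primaryness. Write $I(\frak{p})=\frak{m}_{\frak{p}}^{d_{\frak{p}}}\cap J$ in $R(\frak{p})$, where $J$ is the intersection of the components $\frak{q}^{d_{\frak{q}}}$ with $\frak{q}\subsetneq\frak{p}$; since no component of $J$ is $\frak{m}_{\frak{p}}$-primary, $J:\frak{m}_{\frak{p}}=J$. Because $\frak{p}\in\Ass(R/I)$, we have $\frak{m}_{\frak{p}}\in\Ass(R(\frak{p})/I(\frak{p}))$, so there is a monomial $u\notin I(\frak{p})$ with $I(\frak{p}):u=\frak{m}_{\frak{p}}$. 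From $u\frak{m}_{\frak{p}}\subseteq I(\frak{p})\subseteq J$ and $J:\frak{m}_{\frak{p}}=J$ we get $u\in J$, hence $u\notin\frak{m}_{\frak{p}}^{d_{\frak{p}}}$, i.e.\ $\deg u\leq d_{\frak{p}}-1$; and $u\frak{m}_{\frak{p}}\subseteq\frak{m}_{\frak{p}}^{d_{\frak{p}}}$ gives $\deg u\geq d_{\frak{p}}-1$. Thus $u x_i\in I(\frak{p})$ has degree exactly $d_{\frak{p}}$, so $\indeg(I(\frak{p}))=d_{\frak{p}}$. With this, (a) is immediate, and if $I(\frak{p})$ has a linear resolution it is generated in degree $\reg(I(\frak{p}))$, whence $\reg(I(\frak{p}))=\indeg(I(\frak{p}))=d_{\frak{p}}$, which is the direction your proposal leaves unproved. (For the ``in particular'' statement, apply (b) to the presentation with exponents $\reg(I(\frak{p}))$ itself, rather than arguing that those exponents must coincide with the originally given $d_{\frak{p}}$.)
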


We need the following proposition for the next results, the proof is similar to \cite[Proposition 2.11]{BH}.
 \begin{Proposition}\label{P.2}
  Let $I$ be a monomial ideal with $d$-linear resolution, and assume that $I({\frak{p}}_{\lbrace i\rbrace})={\frak{p}}_{\lbrace i\rbrace} ^{d-a_i}$ for $i=1,..., n$. Then $I=I_{(d; a_1,..., a_n)}$.
  \end{Proposition}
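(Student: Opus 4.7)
The plan is to set $J := I_{(d; a_1, \ldots, a_n)}$, observe that $I \subseteq J$ is immediate from the definition of the $a_i$'s (every $u \in G(I)$ satisfies $\deg_{x_i}(u) \leq a_i$), and reduce the proposition to showing that the graded quotient $J/I$ vanishes. The first point is that $J$ automatically satisfies the same localization identity as $I$, namely $J[i] = \frak{p}_{\{i\}}^{d-a_i}R$ for every $i$: one inclusion is routine, since each $u \in G(J)$ has degree $d$ with $\deg_{x_i}(u) \leq a_i$ and hence $u/x_i^{\deg_{x_i}(u)}$ is a monomial in the $x_j$ ($j \neq i$) of degree at least $d - a_i$; the reverse inclusion follows from $I \subseteq J$ together with the hypothesis on $I$. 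Since $K \cdot R[x_i^{-1}] = K[i] \cdot R[x_i^{-1}]$ for any monomial ideal $K$, this gives $I \cdot R[x_i^{-1}] = J \cdot R[x_i^{-1}]$ for every $i$, so $J/I$ is $x_i$-torsion for every $i$, hence $\frak{m}$-torsion and of finite length.

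Next I would apply the Castelnuovo--Mumford regularity bound to the short exact sequence $0 \to I \to J \to J/I \to 0$. Since $J$ is of Veronese type it is polymatroidal, with $\reg(J) = d = \reg(I)$, and the bound gives $\reg(J/I) \leq \max\{\reg(I) - 1, \reg(J)\} = d$. Because $J$ is generated in degree $d$ one has $(J/I)_k = 0$ for $k < d$; because $J/I$ has finite length its regularity equals its top nonzero degree. Consequently $J/I$ is concentrated in degree exactly $d$ and annihilated by $\frak{m}$.

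The decisive last step uses the linear resolution of $I$ via local cohomology. A nonzero class $\bar u \in (J/I)_d$ would lift, through the natural inclusion $J/I \hookrightarrow R/I$, to a socle element of $R/I$ in degree $d$. But the socle of $R/I$ lies inside $H^0_{\frak{m}}(R/I)$, whose top nonzero degree is bounded by $\reg(R/I) = \reg(I) - 1 = d - 1$. This contradicts the existence of $\bar u$, so $(J/I)_d = 0$ and $I = J$. I expect this last step to be the main obstacle: one has to translate the global linear-resolution hypothesis on $I$ into the correct socle-degree bound on $R/I$ via local cohomology, so that it rules out the degree-$d$ obstruction produced by the Artinian structure of $J/I$ established in the previous paragraph.
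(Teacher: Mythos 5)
Your argument is correct, and its key middle step is genuinely different from the paper's. Both proofs share the same skeleton: set $J=I_{(d;a_1,\dots,a_n)}$, note $I\subseteq J$ since $I$ is generated in degree $d$ with $\deg_{x_i}\leq a_i$, show $J\subseteq I:\frak{m}^{\infty}$, and then use the $d$-linear resolution of $I$ to rule out any saturation discrepancy in degrees $\geq d$. Where you diverge is in how the containment $J\subseteq I:\frak{m}^{\infty}$ is obtained. The paper does it by an explicit combinatorial computation: for each generator $x_1^{b_1}\cdots x_n^{b_n}$ of $J$ it exhibits $s=\sum_i a_i-d$ with $x_1^{b_1}\cdots x_n^{b_n}\frak{m}^{s}\subseteq I$, using the hypothesis in the form $x_i^{a_i}\frak{p}_{\{i\}}^{d-a_i}\subseteq I$ together with a pigeonhole manipulation of exponents. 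You instead observe that the hypothesis forces $I[i]=J[i]=\frak{p}_{\{i\}}^{d-a_i}R$ for every $i$ (one inclusion because generators of $J$ satisfy $\deg_{x_i}\leq a_i$, the other from $I\subseteq J$ and the hypothesis), so $I$ and $J$ agree after inverting each variable, and $J/I$ is supported at $\frak{m}$ and has finite length; this is cleaner and avoids all exponent bookkeeping, at the mild cost of invoking that the Veronese type ideal $J$ has a linear resolution to get $\reg(J)=d$ --- and even that is dispensable, since a nonzero homogeneous element of top degree in the finite-length module $J/I$ already yields a socle element of $R/I$ in some degree $\geq d$. The concluding step is the same fact in two guises: the paper quotes Eisenbud, Corollary 20.19, to get $(I:\frak{m}^{\infty})_{\langle d\rangle}=I$, while you phrase it as the statement that the top nonzero degree of $H^0_{\frak{m}}(R/I)$ is at most $\reg(R/I)=d-1$, which kills the degree-$d$ socle class; either formulation finishes the proof that $I=J$.
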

\begin{proof}
 Since $I$ is generated in degree $d$, we have that $I\subseteq  I_{(d; a_1,..., a_n)}$. Now we want to show
that $I_{(d; a_1,..., a_n)} \subseteq I$. Consider the exact sequence
\[0\longrightarrow I:{\frak{m}}^{\infty}/I
\longrightarrow R/I \longrightarrow R/I:{\frak{m}}^{\infty}\longrightarrow 0.\]
Since $I$ has a $d$-linear resolution, \cite[Corollary 20.19]{E} implies that $(I:{\frak{m}}^{\infty})_d = I_d$, and hence $(I:{\frak{m}}^{\infty})_{\langle d \rangle}= I_{\langle d \rangle}= I$. Here, for any graded ideal $L$, we denote by $L_{{\langle j \rangle}}$ the ideal generated by the jth graded component of $L$. Therefore it is enough to show that $I_{(d;a_1,...,a_n)} \subseteq (I:{\frak{m}}^{\infty})_{\langle d \rangle}$.
Now let $x_{1}^{b_1} . . .  ~ x_{n}^{b_n}  \in G(I_{(d;a_1,...,a_n)})$; then $b_1 + . . .  +b_n = d$ and $b_i \leq  a_i$ for all $i \in [n]$. We claim that $x_{1}^{b_1} . . . ~ x_{n}^{b_n}  {\frak{m}}^s \subseteq  I$ with $s =\sum_{i=1}^{n} a_i -d$. Let $x_{1}^{c_1} . . .  ~ x_{n}^{c_n} \in G({\frak{m}}^s)$. Then $x_{1}^{b_1+c_1} . . . ~  x_{n}^{b_n+c_n} \in x_{1}^{b_1} . . .  ~ x_{n}^{b_n} {\frak{m}}^s$. If $b_i + c_i < a_i$ for all $i$, then $\sum_{i=1}^{n} a_i = d + s =\sum_{i=1}^{n} b_i +\sum_{i=1}^{n} c_i < \sum_{i=1}^{n} a_i$, a contradiction. Hence, without loss of generality, we may assume that $b_1 +c_1 \geq  a_1$, and show that $x_{1}^{b_1+c_1} . . . ~  x_{n}^{b_n+c_n} \in x_{1}^{a_1} {\frak{p}}_{\lbrace 1 \rbrace} ^{d-a_1}\subseteq I$. Since $b_1 + c_1  \geq a_1$, it is enough to show that $x_{2}^{b_2+c_2} . . . ~ x_{n}^{b_n+c_n} \in {\frak{p}}_{\lbrace 1 \rbrace} ^{d-a_1}$.
If $b_i +c_i > d-a_1$ for all $i = 2, . . . , n$, then $x_{2}^{b_2+c_2} . . . ~ x_{n}^{b_n+c_n} \in {\frak{p}}_{\lbrace 1 \rbrace} ^{d-a_1}$.
So we assume $b_i + c_i > d-a_1$ for $i = 2, . . . , t$ that $t \geq 2$ and $b_i + c_i \leq d-a_1$ for $i = t + 1, . . . , n$. Since $t\geq 2$, we have
\[ \sum_{i=2}^{t}d-a_1 + \sum_{i=t+1}^{n}{b_i+c_i}  \geq d-a_1.\]
 This implies that
\[ x_{2}^{d-a_1} . . . ~  x_{t}^{d-a_1} x_{t+1}^{b_{t+1}+c_{t+1}}  . . . ~  x^{b_n+c_n} \in {\frak{p}}_{\lbrace 1 \rbrace} ^{d-a_1}.\]
Since $x_{2}^{b_2+c_2} . . . ~  x^{b_n+c_n} = w(x_{2}^{d-a_1} . . . ~  x_{t}^{d-a_1 }x_{t+1}^{b_{t+1}+c_{t+1}} . . . ~  x^{b_n+c_n} )$ for some monomial $w$, it follows that $x_{2}^{b_2+c_2} . . . ~ x_{n}^{b_n+c_n} \in {\frak{p}}_{\lbrace 1 \rbrace} ^{d-a_1}$, as desired.

\end{proof}

 \begin{Lemma}\label{L.1}
  Let $I$ be a monomial ideal with $d$-linear resolution, and assume that for all ${\frak{p}}\in\mathcal{P}(R)$ the ideal $I({\frak{p}})$ has a linear resolution. Then $I$ is of strong intersection type.
  \end{Lemma}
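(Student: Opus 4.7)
The lemma unpacks, via the ``in particular'' clause of Theorem~\ref{P.1}, into the single identity
\[
I \;=\; \bigcap_{\mathfrak{p}\in\Ass(R/I)} \mathfrak{p}^{\reg(I(\mathfrak{p}))}.
\]
The plan is to prove this equality by checking the two containments separately; once it is established, the equivalent-conditions formulation in Theorem~\ref{P.1} immediately upgrades ``intersection type'' to ``strong intersection type''.

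For the containment $I \subseteq \bigcap_{\mathfrak{p}}\mathfrak{p}^{\reg(I(\mathfrak{p}))}$, fix $\mathfrak{p}\in\Ass(R/I)$ and set $r_{\mathfrak{p}} := \reg(I(\mathfrak{p}))$. Since $I(\mathfrak{p})$ has a linear resolution by hypothesis, all of its minimal monomial generators live in degree exactly $r_{\mathfrak{p}}$. For any $u\in G(I)$, the localized monomial $u(\mathfrak{p})\in I(\mathfrak{p})\subset R(\mathfrak{p})$, obtained by setting $x_{j}=1$ for $x_{j}\notin\mathfrak{p}$, therefore has $R(\mathfrak{p})$-degree at least $r_{\mathfrak{p}}$; this is exactly the statement $u\in\mathfrak{p}^{r_{\mathfrak{p}}}$.

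For the reverse containment $\bigcap_{\mathfrak{p}}\mathfrak{p}^{r_{\mathfrak{p}}}\subseteq I$, I would reuse the saturation argument from the proof of Proposition~\ref{P.2}. Because $I$ has a $d$-linear resolution, \cite[Corollary~20.19]{E} yields $(I:\mathfrak{m}^{\infty})_{d} = I_{d}$ and hence $(I:\mathfrak{m}^{\infty})_{\langle d\rangle} = I$. It therefore suffices to exhibit, for each degree-$d$ monomial $w\in\bigcap_{\mathfrak{p}}\mathfrak{p}^{r_{\mathfrak{p}}}$, an integer $s\geq 0$ with $w\cdot\mathfrak{m}^{s}\subseteq I$. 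For $s$ chosen suitably large in terms of the $r_{\mathfrak{p}}$ and the $a_{i}$, a pigeonhole-style case analysis on each $x^{c}\in G(\mathfrak{m}^{s})$ should force some coordinate of $wx^{c}$ high enough that, combined with the information $w\in\mathfrak{p}^{r_{\mathfrak{p}}}$ for all $\mathfrak{p}\in\Ass(R/I)$, the product $wx^{c}$ becomes divisible by a generator of $I$.

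The main obstacle is running this combinatorial case analysis uniformly across every associated prime, including the embedded ones, where the precise shape of the primary component of $I$ is a priori opaque. Here the full strength of the hypothesis --- that $I(\mathfrak{p})$ has a linear resolution for \emph{every} monomial prime $\mathfrak{p}$, not only those in $\Ass(R/I)$ --- becomes indispensable: it supports an induction on $|\supp(I)|$ that reduces the problem in $R(\mathfrak{p})$ to a Veronese-type situation in which Proposition~\ref{P.2} applies directly, identifying each localization $I(\mathfrak{p})$ explicitly and so making the required push-forward into $I$ transparent.
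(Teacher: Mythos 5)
Your first containment ($I\subseteq\bigcap_{\frak{p}\in\Ass(R/I)}\frak{p}^{\reg(I(\frak{p}))}$, via the fact that each $I(\frak{p})$ is generated in degree $\reg(I(\frak{p}))$) is correct but it is the trivial half. The entire content of the lemma is the reverse containment, equivalently the fact that $I$ is of intersection type at all, and your proposal does not prove it: the ``pigeonhole-style case analysis'' is never specified beyond ``should force'', and the fallback you offer --- an induction on $|\supp(I)|$ reducing to ``a Veronese-type situation in which Proposition~\ref{P.2} applies directly'' --- does not go through. Proposition~\ref{P.2} requires the very restrictive hypothesis $I(\frak{p}_{\lbrace i\rbrace})=\frak{p}_{\lbrace i\rbrace}^{\,d-a_i}$ for \emph{every} $i$, i.e.\ each localization at a height-$(n-1)$ monomial prime must be a power of that prime; this already fails for simple ideals covered by the lemma, e.g.\ the transversal polymatroidal ideal $I=(x_1,x_2)(x_3,x_4)$, where $I(\frak{p}_{\lbrace 1\rbrace})=(x_3,x_4)$ is not a power of $\frak{p}_{\lbrace 1\rbrace}=(x_2,x_3,x_4)$. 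Moreover the target $\bigcap_{\frak{p}}\frak{p}^{r_{\frak{p}}}$ is in general not a Veronese-type ideal, so the explicit divisibility bookkeeping that drives the proof of Proposition~\ref{P.2} has no analogue here. A further (smaller) gap: you only discuss degree-$d$ monomials of the intersection, whereas the containment also requires handling minimal generators of higher degree and ruling out monomials of degree $<d$ when $\frak{m}\notin\Ass(R/I)$.

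The paper avoids this hard containment altogether. Its proof checks the Herzog--Vladoiu criterion: since $I$ has a $d$-linear resolution, \cite[Corollary 20.19]{E} gives $(I:\frak{m}^{\infty})_{\geq d}=I$, and the same argument applied to each $I(\frak{p})$ (which has an $a_{\frak{p}}$-linear resolution by hypothesis) gives $I(\frak{p})=(I(\frak{p}):\frak{m}_{\frak{p}}^{\infty})\cap\frak{m}_{\frak{p}}^{a_{\frak{p}}}$ for all $\frak{p}\in\mathcal{P}(R)$; by \cite[Theorem 1.1]{HV} this is exactly the condition for $I$ to be of intersection type, and then Theorem~\ref{P.1} upgrades this to strong intersection type because every $I(\frak{p})$, $\frak{p}\in\Ass(R/I)$, has a linear resolution. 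Your argument never establishes intersection type and never invokes \cite[Theorem 1.1]{HV}; that characterization (or a genuinely completed direct combinatorial argument, which you do not supply) is the missing ingredient, so as it stands the proposal is a plan rather than a proof.
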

 \begin{proof}
 Consider the exact sequence
\[0 \longrightarrow I:{\frak{m}}^{\infty}/I
\longrightarrow R/I \longrightarrow R/I:{\frak{m}}^{\infty}\longrightarrow 0.\]
Since $I:{\frak{m}}^{\infty}/I$ is a finite length module, it follows that $\reg(I:{\frak{m}}^{\infty}/I)=\max \lbrace j :  (I:{\frak{m}}^{\infty}/I)_j \ne 0\rbrace$ (see \cite[Lemma 1.1]{CH}). Let $J$ be arbitrary monomial ideal with $d$-linear resolution. Since $J$ has $d$-linear resolution, then $\reg(R/J)=\reg(J)-1=d-1$ and \cite[Corollary 20.19]{E} implies that $(I:{\frak{m}}^{\infty})_{\geq d}=I$. Thus we have $\widetilde I_{\geq d}=I$, where $\widetilde I=(I:{\frak{m}}^{\infty})$. Since $I({\frak{p}})$ has $a_{\frak{p}}$-linear resolution for all ${\frak{p}} \in \mathcal{P}(R)$ and for some integer $a_{\frak{p}}$, then  $\widetilde I({\frak{p}})_{\geq a_{\frak{p}}}=I({\frak{p}})$ that this equivalent to saying that $I({\frak{p}})=\widetilde {I({\frak{p}})} \cap {\frak{m}}_{{\frak{p}}}^{a_{\frak{p}}}$. Then by \cite[Theorem 1.1]{HV}, $I$ is of intersection type and so by Theorem \ref{P.1}, $I$ is of strong intersection type.
  \end{proof}

The following result extends \cite[Proposition 2.8(c)]{BH}.
\begin{Proposition}\label{P.3}
 Let  $I \subseteq R=K[x_1, . . . ,x_n]$ be a monomial ideal generated in degree $d$ and $\height(I)=n-1$. Then $I$ is a Veronese type ideal if and only if for all ${\frak{p}}\in\mathcal{P}(R)$ the ideal $I({\frak{p}})$ has a linear resolution.
 \end{Proposition}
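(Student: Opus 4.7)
The plan is to reduce the converse to Proposition~\ref{P.2}, which upgrades the identities $I(\mathfrak{p}_{\{i\}}) = \mathfrak{p}_{\{i\}}^{d-a_i}$ for $i = 1, \dots, n$ into the conclusion that $I$ is a Veronese type ideal. The forward direction is essentially free: a Veronese type ideal is polymatroidal, so property $(iii)$ from the introduction (\cite[Corollary~3.2]{HRV}) makes every monomial localization polymatroidal, and polymatroidal ideals have linear resolutions. For the converse I first note that $I(\mathfrak{m}) = I$, so the hypothesis already supplies the $d$-linear resolution of $I$ that Proposition~\ref{P.2} requires.

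Fix $i$. If $a_i = d$, then $x_i^{d} \in G(I)$ and hence $I(\mathfrak{p}_{\{i\}}) = R(\mathfrak{p}_{\{i\}}) = \mathfrak{p}_{\{i\}}^{0}$, matching $d - a_i = 0$. If $a_i < d$, every generator of $I$ contains some variable $x_j$ with $j \neq i$, so $I \subseteq \mathfrak{p}_{\{i\}}$; since $\height(\mathfrak{p}_{\{i\}}) = n-1 = \height(I)$, the prime $\mathfrak{p}_{\{i\}}$ must be a minimal prime of $I$. I would then argue that this forces $I(\mathfrak{p}_{\{i\}})$ to be $\mathfrak{m}_{\mathfrak{p}_{\{i\}}}$-primary: using the saturation description $I(\mathfrak{p}_{\{i\}}) = I : x_i^{\infty}$, the associated primes of $R(\mathfrak{p}_{\{i\}})/I(\mathfrak{p}_{\{i\}})$ are exactly those associated primes of $R/I$ that do not contain $x_i$, equivalently those sitting inside $\mathfrak{p}_{\{i\}}$, and the minimality of $\mathfrak{p}_{\{i\}}$ leaves $\mathfrak{p}_{\{i\}}$ as the only candidate. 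Combining this with the assumed linear resolution of $I(\mathfrak{p}_{\{i\}})$ and the standard fact recalled at the start of Section~1 (an $\mathfrak{m}$-primary monomial ideal with a linear resolution is a power of the maximal ideal), I obtain $I(\mathfrak{p}_{\{i\}}) = \mathfrak{m}_{\mathfrak{p}_{\{i\}}}^{k}$ for some $k$. A degree comparison -- the monomial $u/x_i^{a_i}$ is a minimal generator of degree $d - a_i$ whenever $u \in G(I)$ attains $\deg_{x_i}(u) = a_i$ -- pins down $k = d - a_i$. Proposition~\ref{P.2} then delivers $I = I_{(d; a_1, \ldots, a_n)}$.

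The step I expect to require the most care is establishing $\mathfrak{m}_{\mathfrak{p}_{\{i\}}}$-primary-ness of $I(\mathfrak{p}_{\{i\}})$ from the minimality of $\mathfrak{p}_{\{i\}}$, since it hinges on the behaviour of associated primes under the saturation $I : x_i^\infty$. Once that point is justified, the remainder is routine case-splitting and a direct appeal to Proposition~\ref{P.2}.
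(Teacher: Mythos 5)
Your proof is correct, and although it terminates at the same place as the paper's argument (namely an application of Proposition \ref{P.2}), the route you take to verify its hypothesis is genuinely different. The paper runs through the Herzog--Vladoiu machinery: by Lemma \ref{L.1} the hypothesis forces $I$ to be of (strong) intersection type, Theorem \ref{P.1} then yields the presentation $I=\mathfrak{p}_1^{d_1}\cap\cdots\cap\mathfrak{p}_r^{d_r}\cap\mathfrak{m}^{d}$ into powers of the height-$(n-1)$ monomial primes and $\mathfrak{m}$, and from this presentation it reads off $I(\mathfrak{p}_{\{i\}})=\mathfrak{p}_{\{i\}}^{\,d-a_i}$ before quoting Proposition \ref{P.2}. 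You bypass intersection type entirely: when $a_i<d$ the height hypothesis makes $\mathfrak{p}_{\{i\}}$ a minimal prime of $I$, the description $I(\mathfrak{p}_{\{i\}})=I:x_i^{\infty}$ together with the standard behaviour of associated primes under saturation (only those avoiding $x_i$, hence contained in $\mathfrak{p}_{\{i\}}$, survive) shows $I(\mathfrak{p}_{\{i\}})$ is $\mathfrak{m}_{\mathfrak{p}_{\{i\}}}$-primary, and the fact recalled at the start of Section 1 (an $\mathfrak{m}$-primary monomial ideal with a linear resolution is a power of the maximal ideal) plus a degree count identifies the exponent as $d-a_i$; the case $a_i=d$ is immediate since then $x_i^{d}\in G(I)$. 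Your approach buys several things: it avoids \cite[Theorems 1.1 and 1.3]{HV} altogether, it uses the linear-resolution hypothesis only at the $n+1$ primes $\mathfrak{m},\mathfrak{p}_{\{1\}},\dots,\mathfrak{p}_{\{n\}}$ rather than at every $\mathfrak{p}\in\mathcal{P}(R)$, and it handles explicitly the indices with $\mathfrak{p}_{\{i\}}\notin\Ass(R/I)$ (equivalently $x_i^{d}\in I$), a case the paper's application of Proposition \ref{P.2} leaves implicit. What the paper's route buys in exchange is uniformity: the same intersection-type presentation is the engine for the subsequent results in higher codimension (Proposition \ref{P.6} and the theorems on $n-2$ and $n-3$ pure powers), where your minimality-of-$\mathfrak{p}_{\{i\}}$ argument is no longer available.
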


\begin{proof}
$(\Longrightarrow)$. This is clear by \cite[Theorem 3.2]{HRV}.\\
 $(\Longleftarrow).$ Since, for all ${\frak{p}}\in \mathcal{P}(R)$, the ideal $I({\frak{p}})$ has a linear resolution, then by Theorem \ref{P.1}, $I={\frak{p}}_{1}^{d_1} \cap {\frak{p}}_{2}^{d_2} \cap . . . \cap {\frak{p}}_{r}^{d_{r}} \cap {\frak{m}}^d $, where ${\frak{p}}_{i}=(x_1, x_2, ...\widehat {{x_i}}, . . . ,x_{n}) \in\Ass(R/I)$ and $d_{i}=\reg(I({\frak{p}}_{i}))$ for $i=1,2,...,r$. Since $I({\frak{p}}_{i})=I( {\frak{p}}_{\lbrace i \rbrace})=I:x_{i}^{a_{i}}={\frak{p}}_{i}^{d-a_{i}} \cap {\frak{m} }^{d-a_{i}}={\frak{p}}_{i}^{d-a_{i}}$, by using Proposition \ref{P.2}, $I$ is a Veronese type ideal of the form $ I_{(d; a_1, .... ,a_i, ... , a_n)}$.
\end{proof}

 By Proposition \ref{P.3}, we provide a simplified proof for \cite[Proposition 2.4]{BH}.

\begin{Proposition}\label{P.8}
Let $I\subseteq R=K[x_1, . . . ,x_n]$ be a monomial ideal generated in degree $d$, and suppose that $I$ contains at least $n-1$ pure powers of the variables, say $x_{1}^{d_1},..., x_{n-1}^{d}$. Then $I$ is a Veronese type ideal if and only if for all ${\frak{p}}\in\mathcal{P}(R)$ the ideal $I({\frak{p}})$ has a linear resolution.
\end{Proposition}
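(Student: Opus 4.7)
The plan is to reduce Proposition \ref{P.8} to the immediately preceding Proposition \ref{P.3} by a height analysis forced on $I$ by the hypothesis that $x_1^{d}, \ldots, x_{n-1}^{d}$ all lie in $I$.

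The forward direction ($\Longrightarrow$) is immediate from the machinery already recalled in the introduction: a Veronese type ideal is polymatroidal, and by \cite[Theorem 3.2]{HRV} every monomial localization of a polymatroidal ideal is again polymatroidal, hence has a linear resolution (polymatroidal ideals have linear quotients, so linear resolution).

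For the converse ($\Longleftarrow$), I would first observe that since $x_i^d\in I$ for $i=1,\ldots,n-1$, each variable $x_1,\ldots,x_{n-1}$ lies in $\sqrt{I}$, so every minimal prime of $I$ contains $(x_1,\ldots,x_{n-1})$. This forces $\height(I)\in\{n-1,n\}$, and the two cases can be dispatched separately. If $\height(I)=n-1$, then $I$ satisfies the hypotheses of Proposition \ref{P.3}, and we conclude directly that $I$ is a Veronese type ideal. If $\height(I)=n$, then $I$ is $\mathfrak m$-primary; the hypothesis (applied to $\mathfrak p=\mathfrak m$, so $I(\mathfrak m)=I$) says that $I$ itself has a linear resolution, and any $\mathfrak m$-primary monomial ideal with a linear resolution equals $\mathfrak m^d=I_{(d;d,\ldots,d)}$, which is Veronese type (this fact is already recalled at the start of Section 1).

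There is no real obstacle here: the work has been done inside Proposition \ref{P.3}, and Proposition \ref{P.8} is essentially a corollary obtained by ruling out heights below $n-1$ via the pure-power hypothesis and treating the degenerate $\mathfrak m$-primary case by hand. The only point to be a little careful about is matching the indices $a_i$: in the case $\height(I)=n-1$, the unique associated prime of $R/I$ of height $n-1$ is $\mathfrak p_{\{n\}}=(x_1,\ldots,x_{n-1})$, and Proposition \ref{P.3} then yields $I=I_{(d;a_1,\ldots,a_n)}$ where $a_i=d$ for $i=1,\ldots,n-1$ (because $x_i^d\in I$) and $a_n=\max\{\deg_{x_n}(u):u\in G(I)\}$, so $I$ is of Veronese type in the precise sense defined earlier.
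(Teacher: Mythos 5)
Your proposal is correct and follows essentially the same route as the paper: the pure powers $x_1^d,\dots,x_{n-1}^d$ force $\height(I)\geq n-1$, the case $\height(I)=n$ gives $I=\mathfrak m^d$ (an $\mathfrak m$-primary ideal with linear resolution), and the case $\height(I)=n-1$ is settled by Proposition \ref{P.3}, with the forward direction coming from \cite[Theorem 3.2]{HRV}. Your added remark identifying the exponents $a_i=d$ for $i\leq n-1$ is a harmless elaboration of the same argument.
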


\begin{proof}
 Since $x_{1}^{d}, . . . , x_{n-1}^{d} \in I$ we have $\height(I)\geq n-1$. Thus if $\height(I)=n$, then $I={\frak{m}}^d$ and if $\height(I)=n-1$, then by Proposition \ref{P.3}, $I$ is a Veronese type of the form $ I_{(d; d, .... ,d, a_n)} $. This completes the proof.
\end{proof}

\begin{Proposition}\label{P.6}
 Let $I \subseteq R= K[x_1,x_2,x_3,x_4]$ be an unmixed monomial ideal of degree $d$, such that $\height(I)=2$. Then $I({ {\frak{p}}})$ has a linear resolution for all ${\frak{p}}\in\mathcal{P}(R)$ if and only if $I$ is a transversal polymatroidal ideal or a squarefree Veronese type ideal.
 \end{Proposition}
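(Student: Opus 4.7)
For ($\Leftarrow$), both transversal polymatroidal ideals and squarefree Veronese type ideals are polymatroidal, so by property (iii) every monomial localization is again polymatroidal and therefore has a linear resolution.

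For ($\Rightarrow$), apply Lemma~\ref{L.1} to obtain the strong intersection type representation
\[
I \;=\; \bigcap_{\mathfrak{p} \in \Ass(R/I)} \mathfrak{p}^{d_\mathfrak{p}}, \qquad d_\mathfrak{p} = \reg(I(\mathfrak{p})).
\]
Since $I$ is unmixed of height $2$ in four variables, each $\mathfrak{p}\in\Ass(R/I)$ has the form $\mathfrak{p}_{ij}=(x_i,x_j)$; set $E=\{\{i,j\}:\mathfrak{p}_{ij}\in\Ass(R/I)\}\subseteq\binom{[4]}{2}$ and $d_{ij}:=d_{\mathfrak{p}_{ij}}$. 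Substituting $x_r\mapsto 1$ for $r\notin\{i,j\}$ in the intersection shows $I(\mathfrak{p}_{ij})=\mathfrak{p}_{ij}^{d_{ij}}$, and a monomial $x_1^{e_1}x_2^{e_2}x_3^{e_3}x_4^{e_4}$ lies in $I$ if and only if $e_i+e_j\geq d_{ij}$ for every $\{i,j\}\in E$.

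The plan is a case analysis on the graph $E$, up to the natural $S_4$-action on $\{1,2,3,4\}$. If $E$ is a matching (a single edge or two disjoint edges), the primes $\mathfrak{p}_{ij}$ in the intersection have pairwise disjoint variable supports, so the intersection equals the product and $I=\prod_{\{i,j\}\in E}\mathfrak{p}_{ij}^{d_{ij}}$ is a transversal polymatroidal ideal. Otherwise, $E$ contains two edges sharing a common vertex; in each such configuration (path $P_3$, star $K_{1,3}$, triangle, path $P_4$, cycle $C_4$, triangle with pendant, $K_4$ minus an edge, or $K_4$), use the inequality description of $I$ together with the additional constraint that each three-variable localization $I(\mathfrak{p}_{\{k\}})=I:x_k^\infty$ has a linear resolution --- and hence is polymatroidal in three variables, by the three-variable case (\cite[Proposition~2.7]{BH}, reproved earlier in the paper) applied to $I(\mathfrak{p}_{\{k\}})$ and its two-variable localizations --- to either exhibit two minimal generators of $I$ of distinct total degrees (contradicting single-degree generation) or conclude that all $d_{ij}=1$ and $E$ is a triangle on three of the four vertices (giving $I=I_{(2;1,1,1,0)}$ up to permutation) or the complete graph $K_4$ (giving $I=I_{(3;1,1,1,1)}$); both are squarefree Veronese type.

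The main obstacle is this combinatorial case analysis. For each non-matching graph $E$ one has to identify explicit witnesses --- either two minimal generators of $I$ of differing degrees extracted from the inequality description, or a three-variable localization $I(\mathfrak{p}_{\{k\}})$ that fails to be polymatroidal --- so that after imposing the single-degree condition and the hypothesis on all monomial localizations, only the triangle and $K_4$ configurations with unit weights survive.
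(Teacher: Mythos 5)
Your skeleton is the same as the paper's: for ($\Leftarrow$) both quote \cite[Theorem 3.2]{HRV}, and for ($\Rightarrow$) the paper likewise passes (via Theorem \ref{P.1}; your extra appeal to Lemma \ref{L.1} is in fact a more careful way to justify the intersection presentation) to $I=\bigcap_i\frak{p}_i^{d_i}$ with all $\frak{p}_i$ of height two, treats the associated primes as edges on $\{1,2,3,4\}$, and eliminates configurations by localizing at the three-variable primes $\frak{p}_{\{k\}}$ and using that an ideal with a linear resolution is generated in a single degree. Your endpoint classification (matchings give transversal ideals; among the non-matchings only the triangle and $K_4$ with all exponents equal to $1$ survive, giving squarefree Veronese ideals) agrees with the paper's Cases 1--4.

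The genuine gap is that the decisive step is announced rather than carried out: ``for each non-matching graph $E$ one has to identify explicit witnesses'' is precisely the content of the implication, and the proposal stops there. For the record, the paper closes it as follows, organizing by how many of the six exponents vanish. If exactly one or exactly two vanish, then a suitable localization $I(\frak{p}_{\{k\}})$ becomes an intersection of two powers of height-two primes whose supports share a variable, say $(x_i,x_j)^a\cap(x_j,x_k)^b$ with $a,b\geq 1$; such an ideal has minimal generators $x_j^{\max(a,b)}$ and $x_i^ax_k^b$ of different degrees, so it cannot have a linear resolution, a contradiction. If at least three vanish, $I$ is an intersection of at most three prime powers and \cite[Proposition 2.8]{BH} gives the transversal (or degree-two matroidal, i.e.\ squarefree Veronese) conclusion. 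If none vanish, then $I(\frak{p}_{\{1\}})=(x_2,x_3)^{d_1}\cap(x_2,x_4)^{d_2}\cap(x_3,x_4)^{d_3}$ is the triangle configuration in three variables, and \cite[Proposition 2.8]{BH} forces $d_1=d_2=d_3=1$; repeating for the other vertices gives all $d_i=1$ and $I=(x_1x_2x_3,x_1x_2x_4,x_1x_3x_4,x_2x_3x_4)$. So the witnesses you postulate do exist, and your planned use of the three-variable characterization would play the role of the paper's citation of \cite[Proposition 2.8]{BH}; but as written the case analysis --- the only nontrivial part of the proof --- is missing.
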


\begin{proof}
$(\Longleftarrow)$. If $I$ is a polymatroidal ideal, then for all ${\frak{p}}\in \mathcal{P}(R)$ the ideal $I({\frak{p}})$ has a linear resolution (see \cite[Theorem 3.2]{HRV}).

$(\Longrightarrow)$.
Let for all ${\frak{p}}\in \mathcal{P}(R)$ the ideal $I({\frak{p}})$ has a linear resolution, by Theorem \ref{P.1} we have that $I={\frak{p}}_{1}^{d_1} \cap {\frak{p}}_{2}^{d_2} \cap . . . \cap {\frak{p}}_{r}^{d_r}$ such that $\height({\frak{p}}_i)=2$ and $d_i=\reg(I({\frak{p}}_i))=\deg(I({\frak{p}}_i))$. Therefore by our hypothesis we have $I=(x_2,x_3)^{d_1} \cap (x_2,x_4)^{d_2} \cap (x_3,x_4)^{d_3} \cap (x_1,x_3)^{d_4} \cap (x_1,x_4)^{d_5} \cap (x_1,x_2)^{d_6} $, with this presentation we have the following four cases. \\
{\bf Case}(1): If exactly one of the exponent equal to zero, then, without loss of generality, we may assume that $d_6=0$. Thus $I({\frak{p}}_{\lbrace 3 \rbrace})=(x_2,x_4)^{d_{2}} \cap (x_1,x_4)^{d_{5}}$. Since $I({\frak{p}}_{\lbrace3\rbrace})$ is generated in single degree then we have $G((x_2,x_4)) \cap G((x_1,x_4))= \emptyset$, this is a contradiction.\\
{\bf Case}(2): If exactly two of the exponents equal to zero, then, without loss of generality we may assume that $d_5=d_6=0$. Hence $I({\frak{p}}_{\lbrace 2 \rbrace})=(x_3,x_4)^{d_{3}} \cap (x_1,x_3)^{d_{4}}$. Since $I({\frak{p}}_{\lbrace 2\rbrace})$ is generated in a single degree then we have $G((x_3,x_4)) \cap G((x_1,x_3))= \emptyset$, this is also a contradiction.\\
{\bf Case}(3): If the number of exponents that can be zero bigger than or equal to $3$, then $I={\frak{p}}_{1}^{a_1} \cap {\frak{p}}_{2}^{a_2} \cap {\frak{p}}_{3}^{a_3}$, and so by \cite[Proposition 2.8]{BH}, $I$ is a transversal polymatroidal ideal in this case. \\
{\bf Case}(4): If all exponents are non-zero, then $I({\frak{p}}_{\lbrace1\rbrace})=(x_2,x_3)^{d_{1}} \cap  (x_2,x_4)^{d_{2}} \cap (x_3,x_4)^{d_{3}}$. Therefore by \cite[Proposition 2.8]{BH} we have ${d_{1}}={d_{2}}={d_{3}}=1$ and similarly we have $d_1= . . . =d_6=1$. Therefore $I=(x_2,x_3) \cap (x_2,x_4)\cap (x_3,x_4) \cap (x_1,x_3) \cap (x_1,x_4) \cap (x_1,x_2) = (x_{1}x_{2}x_{3}, x_{1}x_{2}x_{4}, x_{1}x_{3}x_{4}, x_{2}x_{3}x_{4})$, which is a squarefree Veronese type ideal. This completes the proof.
\end{proof}

\begin{Proposition}\label{P.7}
 Let $I \subseteq R= K[x_1,x_2,x_3,x_4]$ be a monomial ideal of degree $d$ with no embedded prime ideals. Then $I({ {\frak{p}}})$ has a linear resolution for all ${\frak{p}} \in \mathcal{P}(R)$ if and only if $I$ is a transversal polymatroidal ideal or a squarefree Veronese type ideal either a matroidal ideal of degree $2$.
\end{Proposition}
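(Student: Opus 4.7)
The plan is to address the two implications separately, with $(\Longleftarrow)$ essentially immediate and $(\Longrightarrow)$ proved by a case analysis on $h=\height(I)$.

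For $(\Longleftarrow)$, each of the three classes listed---transversal polymatroidal, squarefree Veronese type, and matroidal of degree~$2$---consists of polymatroidal ideals, so \cite[Theorem 3.2]{HRV} ensures every monomial localization $I(\frak{p})$ is again polymatroidal and in particular has a linear resolution.

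For $(\Longrightarrow)$, I would first reduce to the case $\gcd(I)=1$: writing $I=uJ$ with $u=\gcd(I)$ and $\gcd(J)=1$, the hypotheses pass to $J$ and each of the three target classes is preserved under multiplication by the monomial $u$ (the factor $u$ simply adjoins principal prime factors to a transversal presentation). Under $\gcd(I)=1$ no prime $(x_i)$ contains $I$, so $\height(I)\geq 2$, and I split on $h=\height(I)$.

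If $h=4$, then $I$ is $\frak{m}$-primary with linear resolution, forcing $I=\frak{m}^d$, which is transversal polymatroidal. If $h=2$, I would invoke Proposition~\ref{P.6} after verifying that the no-embedded-primes hypothesis together with single-degree generation rules out mixed minimal primes of different heights, making $I$ unmixed of height~$2$.

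The principal obstacle is the case $h=3$. Proposition~\ref{P.3} identifies $I$ with a Veronese type ideal $I_{(d;a_1,a_2,a_3,a_4)}$. By Theorem~\ref{P.1} and the no-embedded-primes hypothesis, the primary decomposition of $I$ carries no $\frak{m}^d$ factor, so $I=\bigcap_{a_i<d}\frak{p}_i^{d-a_i}$; a direct count of least-degree monomials in this intersection forces $\sum_{i=1}^{4} a_i\leq d+2$ when all $a_i<d$. On the other hand, $\height(I)=3$ imposes $a_k+a_l\geq d$ for every pair $k\neq l$, and summing over the six such pairs gives $\sum_i a_i\geq 2d$. Comparing the two bounds yields $d\leq 2$; together with full support this forces $d=2$ and $a_i=1$ for every $i$, so $I$ is the squarefree Veronese ideal $I_{(2;1,1,1,1)}$, a matroidal ideal of degree~$2$. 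The residual subcases where some $a_i$ attains the value $d$ are ruled out by showing that they introduce $\frak{m}$ as an embedded prime, contradicting the hypothesis.
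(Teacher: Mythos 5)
The backward implication and the extreme cases ($\height(I)=4$, and the unmixed height-two case via Proposition~\ref{P.6}) are fine, but your argument has a genuine gap exactly where the paper has to work hardest: you dispose of the height-two case by asserting that ``the no-embedded-primes hypothesis together with single-degree generation rules out mixed minimal primes of different heights, making $I$ unmixed.'' That assertion is false. Take $I=(x_1x_3,x_1x_4,x_2x_3,x_2x_4,x_3x_4)=(x_3,x_4)\cap(x_1,x_2,x_3)\cap(x_1,x_2,x_4)$ in $K[x_1,\dots,x_4]$: it is matroidal of degree $2$, so every monomial localization has a linear resolution, it has no embedded primes, yet its minimal primes have heights $2$ and $3$. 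This mixed case is nonempty, it satisfies all your hypotheses, and it is precisely the source of the third alternative ``matroidal ideal of degree $2$'' in the statement; your case split by $\height(I)$ never produces it. The paper's proof devotes its main effort to exactly this situation: assuming $I$ is not unmixed, it shows (using that each $I(\frak{p})$ is generated in a single degree) that, up to relabeling, $\Ass(R/I)$ can only be $\{(x_2,x_3,x_4),(x_1,x_3),(x_1,x_2,x_4)\}$, so $I=(x_2,x_3,x_4)^{d_1}\cap(x_1,x_3)^{d_2}\cap(x_1,x_2,x_4)^{d_3}$, and then invokes \cite[Proposition 2.8]{BH} to conclude transversal or matroidal of degree $2$. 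Your proposal contains no substitute for this step.

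Two smaller problems. First, in your $h=3$ analysis the claim that the ``residual subcases where some $a_i$ attains the value $d$'' always force $\frak{m}$ to be an embedded prime is wrong: $I=(x_1,x_2,x_3)^d$ has $\height 3$, $a_1=a_2=a_3=d$, no embedded primes, and it is simply transversal; relatedly, you invoke full support in that count but only reduced to $\gcd(I)=1$. Second, the $\gcd$-reduction itself is not innocuous for this statement: multiplying, say, the squarefree Veronese ideal $I_{(2;1,1,1,1)}$ by a variable does not obviously land back in any of the three listed classes, so ``each of the three target classes is preserved under multiplication by $u$'' needs justification (the paper sidesteps this entirely by arguing directly from the intersection presentation given by Theorem~\ref{P.1}).
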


\begin{proof}
$(\Longleftarrow)$ immediately follows by \cite[Theorem 3.2]{HRV}.\\
$(\Longrightarrow)$.
If $\frak{m}\in\Ass(R/I)$, then $I=\frak{m}^d$ and the result follows in this case. Let $\frak{m}\notin\Ass(R/I)$. If $I$ is an unmixed ideal, then by Propositions \ref{P.3} and \ref{P.6} the result follows. Now, we assume that $I$ is not an unmixed ideal and so we have $\frak{p}_1,\frak{p}_2 \in\Ass(R/I)$ such that $\height(\frak{p}_1)=3$ and $\height(\frak{p}_2)=2$. Hence, without loss of generality we may assume that $\frak{p}_1=(x_2,x_3,x_4)$ and $\frak{p}_2=(x_1,x_3)$. Since $I$ has no embedded prime ideals, then $(x_2,x_3), (x_2,x_4),(x_3,x_4), (x_1,x_2,x_3), (x_1,x_3,x_4)$ are not in $\Ass(R/I)$. We claim that $(x_1,x_4),(x_1,x_2)$ are not in $\Ass(R/I)$. Indeed, let $(x_1,x_4)\in\Ass(R/I)$. Then $I(\frak{p}_{\lbrace2\rbrace})=(x_1,x_3)^{a_1} \cap (x_1,x_4)^{a_2}$, for some integer $a_1$ and $a_2$. Since $I(\frak{p}_{\lbrace 2\rbrace})$ is generated in a single degree then we have $G((x_1,x_3)) \cap G((x_1,x_4))= \emptyset$, this is a contradiction. Similarly, $(x_1,x_2) \notin\Ass(R/I)$. Therefore $I=(x_2,x_3,x_4)^{d_1} \cap (x_1,x_3)^{d_2} \cap (x_1,x_2,x_4)^{d_3}$ and so by \cite[Proposition 2.8]{BH}  $I$ is a transversal polymatroidal ideal or a matroidal ideal of degree $2$. This completes the proof.
\end{proof}

Let $I\subseteq R=K[x_1,..., x_n]$ be a monomial ideal generated in degree $d$. Bandari-Herzog \cite[Proposition 2.8]{BH}, proved that if $I$ contains at least $n-1$ pure powers of the variables, say $x_{1}^d , . . .  , x_{n-1}^d$, then for all $\frak{p}\in\mathcal{P}(R)$ the ideal $I({{\frak{p}}})$ has a linear resolution if and only if $I$ is a Veronese type ideal of the form $I=I_{(d; d, d,..., d, k)}$ for some $k$. The following example shows that if $I$ contains $n-2$ pure powers of the variables, say $x_{1}^d ,... , x_{n-2}^d$ and for all $\frak{p}\in \mathcal{P}(R)$ the ideal $I({ {\frak{p}}})$ has a linear resolution , then $I$ is not a Veronese type ideal. In the following proposition, with this condition, we prove that $I$ is a polymatroidal ideal which is a special case of Bandari-Herzog's Conjecture \cite[Conjecture 2.9]{BH}.

\begin{Example}
Let $I=(x_1,x_2)^2 \cap (x_1,x_2,x_3)^3 \cap (x_1,x_2,x_4)^3 \cap (x_1,x_2,x_3,x_4)^5 \subseteq R= K[x_1,x_2,x_3,x_4]$ be a monomial ideal of degree $5$. The ideal $I$ contain $2$ pure power of the variables, $x_1^{5}$ and $x_2^{5}$. $a_3= \max \lbrace\deg_{x_3}(w) : w \in G(I) \rbrace =2$, but $x_{3}^{2}x_{4}^{2} x_1 \notin I$. Hence $I$ is not a Veronese type ideal but by the following proposition $I$ is polymatroidal.
\end{Example}

\begin{Theorem}\label{P.8}
Let $I \subseteq R=K[x_1,..., x_n]$ be a monomial ideal generated in degree $d$, and suppose that $I$ contains at least $n-2$ pure powers of the variables, say $x_{1}^d , . . .  , x_{n-2}^d$. Then $I$ is polymatroidal ideal if and only if $I({{\frak{p}}})$ has a linear resolution for all ${\frak{p}} \in \mathcal{P}(R)$.
\end{Theorem}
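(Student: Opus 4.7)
The forward direction is immediate from \cite[Theorem 3.2]{HRV}, since monomial localizations of polymatroidal ideals remain polymatroidal and so have linear resolutions. For the converse I plan to combine Lemma \ref{L.1} with a case analysis on $\height(I)$. By Lemma \ref{L.1}, the hypothesis that $I({\frak{p}})$ has a linear resolution for every ${\frak{p}}\in\mathcal{P}(R)$ forces
\[I \;=\; \bigcap_{{\frak{p}}\in\Ass(R/I)} {\frak{p}}^{\reg(I({\frak{p}}))}.\]
Because $x_1^d,\ldots,x_{n-2}^d\in I$, every associated prime of $I$ contains $x_1,\ldots,x_{n-2}$, so the only possibilities for members of $\Ass(R/I)$ are the four primes
\[{\frak{p}}_0=(x_1,\ldots,x_{n-2}),\quad {\frak{p}}_1={\frak{p}}_0+(x_{n-1}),\quad {\frak{p}}_2={\frak{p}}_0+(x_n),\quad \mm,\]
and in particular $\height(I)\in\{n-2,n-1,n\}$.

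The extreme heights are immediate. If $\height(I)=n$, then $I$ is $\mm$-primary with a linear resolution, hence $I=\mm^d$. If $\height(I)=n-1$, Proposition \ref{P.3} yields that $I$ is a Veronese type ideal, which is polymatroidal. The substantive case is $\height(I)=n-2$, where ${\frak{p}}_0$ is the unique minimal prime and one has the presentation
\[I \;=\; {\frak{p}}_0^{d_0} \cap {\frak{p}}_1^{d_1} \cap {\frak{p}}_2^{d_2} \cap \mm^d\]
(with the convention $d_i=0$ when ${\frak{p}}_i\notin\Ass(R/I)$, so the corresponding factor is $R$). Translating each factor into a coordinate-wise inequality shows that $G(I)$ consists exactly of the degree-$d$ monomials $u=x_1^{b_1}\cdots x_n^{b_n}$ satisfying
\[b_{n-1}+b_n\le d-d_0,\qquad b_n\le d-d_1,\qquad b_{n-1}\le d-d_2.\]

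To conclude, I verify the polymatroidal exchange property directly from this description. Given $u,v\in G(I)$ with $\deg_{x_i}(u)>\deg_{x_i}(v)$, I must find $j$ with $\deg_{x_j}(v)>\deg_{x_j}(u)$ and $x_j u/x_i\in I$. If $i\le n-2$ and some $j\le n-2$ satisfies $\deg_{x_j}(v)>\deg_{x_j}(u)$, then the exchange fixes both $b_{n-1}$ and $b_n$ and all three displayed inequalities survive. Otherwise $\deg_{x_j}(v)\le\deg_{x_j}(u)$ for every $j\le n-2$, so the degree identity $\sum_k(b_k(u)-b_k(v))=0$ forces $b_{n-1}(v)+b_n(v)>b_{n-1}(u)+b_n(u)$; choosing $j\in\{n-1,n\}$ with $\deg_{x_j}(v)>\deg_{x_j}(u)$, the key inequality
\[b_{n-1}(u)+b_n(u)+1 \;\le\; b_{n-1}(v)+b_n(v) \;\le\; d-d_0\]
secures the ${\frak{p}}_0$-constraint for $x_j u/x_i$, while $\deg_{x_j}(v)\ge\deg_{x_j}(u)+1$ together with $v\in I$ gives the individual bound on the $j$-th coordinate. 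The remaining case $i\in\{n-1,n\}$ is easier: either a swap with some $j\le n-2$ relaxes all three constraints simultaneously, or a balance argument forces a swap within $\{n-1,n\}$ which preserves the sum $b_{n-1}+b_n$ and relaxes the individual bound on the original coordinate.

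The main obstacle is this final combinatorial verification rather than any additional algebraic input. Lemma \ref{L.1} does the conceptual work by reducing the problem to the three-inequality description of $G(I)$ above, and the polymatroidal exchange then follows from an elementary but careful tracking of those inequalities; notably, no extra compatibility between $d_0,d_1,d_2$ is required beyond what is already implicit in $u,v\in I$.
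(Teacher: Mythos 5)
Your proof is correct and takes essentially the same route as the paper: \cite[Theorem 3.2]{HRV} for the forward direction, the presentation $I=\bigcap_{\frak{p}\in\Ass(R/I)}\frak{p}^{\reg(I(\frak{p}))}$ (Lemma \ref{L.1} with Theorem \ref{P.1}) together with Proposition \ref{P.3} to reduce to $\height(I)=n-2$, and then a direct verification of the exchange property from the linear-inequality description of the degree-$d$ monomials in $I$. The only deviation is bookkeeping: you settle the borderline inequalities for $x_ju/x_i$ by comparing with the constraints already satisfied by $v$ (e.g.\ $b_j(u)+1\le b_j(v)\le d-d_k$), whereas the paper first derives the relations $d_2=d-a_n$, $d_3=d-a_{n-1}$, $d_4=d$ and reaches the same contradictions via $r_{n-1}<s_{n-1}\le a_{n-1}$ and $r_n<s_n\le a_n$, so your version is a mild streamlining of the identical argument.
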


\begin{proof}
$(\Longrightarrow)$ immediately follows by \cite[Theorem 3.2]{HRV}.\\
$(\Longleftarrow)$. Since for all ${\frak{p}}\in\mathcal{P}(R)$ the ideal $I({\frak{p}})$ has a linear resolution, by Theorem \ref{P.1} $I={\frak{p}}_{1}^{d_1} \cap {\frak{p}}_{2}^{d_2} \cap . . . \cap {\frak{p}}_{r}^{d_r}$, where $d_i=\reg(I({\frak{p}}_i))=\deg(I({\frak{p}}_i))$. Since $x_{1}^d , . . .  , x_{n-2}^d \in I$, then $\height(I) \geq n-2$. If $\height(I)\geq n-1$, then by Proposition \ref{P.3}, $I$ is a Veronese type ideal. Hence, we assume that $\height(I)=n-2$, and we have the following presentation:
\[I=(x_1, . . . x_{n-2})^{d_1} \cap (x_1, . . . , x_{n-2},x_{n-1})^{d_2} \cap (x_1, . . . , x_{n-2},x_{n})^{d_3} \cap {\frak{m}}^{d_4} .\]
 Since for all ${\frak{p}} \in \mathcal{P}(R)$ the ideal $I({ {\frak{p}}})$ has a linear resolution, then $I({\frak{p}_{\lbrace n-1 \rbrace}})=(x_1, . . . x_{n-2})^{d_1} \cap  (x_1, . . . , x_{n-2},x_{n})^{d_3}$. If $(x_1, . . . , x_{n-2},x_{n})\in\Ass(R/I)$, then $ d_3=\deg(I({\frak{p}_{\lbrace n-1 \rbrace }}))=d-a_{n-1}$ otherwise $d_{3}=0$. Similarly if $(x_1, . . . , x_{n-2},x_{n-1}) \in\Ass(R/I)$, then $d_2=\deg(I({\frak{p}_{\lbrace n \rbrace }}))=d-a_{n}$  otherwise $d_{2}=0$ and also if ${\frak{m}} \in\Ass(R/I)$, then $d_{4}=d$ otherwise $d_{4}=0$.
Since $\height(I)=n-2$, then $d_1 \ne 0$ and $d_1 < d_2,d_3$.
Note that $u= x_{1}^{r_1} . . . x_{n}^{r_{n}} \in G(I)$ if and only if
$\deg(u)=d$ and we have the following inequalities:
\begin{align*}
(1) \quad \quad \quad \quad \quad \quad ~ r_1 + . . .  +r_{n-2} \geq d_1 \\
(2) \quad \quad \quad r_1 + . . .  +r_{n-2}+ r_{n-1} \geq d_2 \\
(3)  \quad \quad \quad \quad r_1 + . . .  +r_{n-2}+ r_{n} \geq d_3 \\
(4) \quad r_1 + . . .  +r_{n-2}+ r_{n-1}+ r_{n} =d.
\end{align*}
Let $u= x_{1}^{r_1} . . . x_{n}^{r_{n}}, v= x_{1}^{s_1} . . . x_{n}^{s_{n}}\in G(I)$  and $\deg_{x_i}(u) > \deg_{x_i}(v)$. We want to show that there exists a variable $x_j$ such that $\deg_{x_j}(u) <\deg_{x_j}(v)$ and $x_{j}(u/x_{i}) \in I$. There are two cases. \\
{\bf Case 1:}
Suppose that $r_{i} > s_{i}$ for some $i= 1, . . . ,n-2$. Without loss of generality, we can assume that $r_{1} > s_{1}$. If there exists $r_{j} < s_{j}$ for some $j= 2,...,n-2$, then $x_{j}(u/x_1) \in I$, since none of the left-hand sides of the inequalities above
change, and so we have the result. Thus we assume that $r_{j} \geq s_{j}$ for all $ 2 \leq j \leq n-2$. Since $u$ and $v$ have the same degree, there exists some $r_j < s_j$  with $n-1 \leq j \leq n$. Without loss of generality, we assume that $r_{n-1} < s_{n-1}$. The monomial $x_{n-1}(u/x_1)$ satisfies in all inequalities except $(3)$. If $r_1 + . . .  +r_{n-2}+ r_{n} > d_3$, then all inequalities hold. If $r_1 + . . .  +r_{n-2}+ r_{n} = d_3=d-a_{n-1}$, then $r_{n-1}=a_{n-1}$ and this is a contradiction.\\
{\bf Case 2:} Suppose that $r_i > s_i $ for some $i=n-1,n$. Without loss of generality, we assume that $r_{n-1} > s_{n-1}$. If there exists $r_{j} < s_{j}$ for some $j= 1 , . . . , n-2$, then $x_{j}(u/x_{n-1}) \in I$ since none of the left-hand sides of the inequalities above change, and we have the result. Suppose that $r_{j} \geq s_{j}$ for all $j= 1, . . . , n-2$. Thus we can assume that $r_{n} < s_{n}$. The monomial $x_{n}(u/x_{n-1})$ satisfies in all inequalities except $(2)$. If $r_1 + . . .  +r_{n-2}+ r_{n-1} > d_2$, then $x_{n}(u/x_{n-1}) \in I$, and we have the result. If $r_1 + . . .  +r_{n-2}+ r_{n-1} =d_2= d-{a_n}$, then $r_{n}=a_{n}$ and this is a contradiction. This completes the proof.
\end{proof}

\begin{Proposition}\label{P.7}
 Let $I \subseteq R= K[x_1,x_2,x_3,x_4]$ be a monomial ideal of degree $d$. Then $I$ is polymatroidal ideal if and only if $I({ {\frak{p}}})$ has a linear resolution for all ${\frak{p}} \in \mathcal{P}(R)$.
 \end{Proposition}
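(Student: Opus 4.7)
The implication $(\Longrightarrow)$ is immediate from \cite[Theorem 3.2]{HRV}. For the converse, my plan is a case analysis on $\height(I)$ after a preliminary reduction. If $\gcd(I)=u\ne 1$, write $I=uJ$: then $J$ is polymatroidal iff $I$ is, and the hypothesis on monomial localizations transfers from $I$ to $J$; passing to the polynomial ring generated by $\supp(I)$ as well, I may assume $\gcd(I)=1$ and $I$ is full-supported. Since $I=I(\frak{m})$ has a $d$-linear resolution, Lemma \ref{L.1} then yields the strong intersection type presentation
\[
I \;=\; \bigcap_{\frak{p}\in\Ass(R/I)} \frak{p}^{d_\frak{p}},\qquad d_\frak{p}=\reg(I(\frak{p})),
\]
and the condition $\gcd(I)=1$ forces $\height(I)\geq 2$.

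Next I split into cases on $\height(I)\in\{2,3,4\}$. When $\height(I)=4$ the ideal $I$ is $\frak{m}$-primary with a linear resolution, so $I=\frak{m}^d$ and is polymatroidal. When $\height(I)=3$, Proposition \ref{P.3} applies directly to show $I$ is a Veronese type ideal. When $\height(I)=2$ and $I$ has no embedded primes, the preceding proposition of this section (for monomial ideals in four variables without embedded prime ideals) gives that $I$ is either a transversal polymatroidal ideal, a squarefree Veronese type ideal, or a degree-$2$ matroidal ideal.

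The remaining case, and the main obstacle, is $\height(I)=2$ combined with the presence of embedded primes, which must then be of height $3$ or $4$. Here I would exploit the intersection-type presentation above: each embedded localization $I(\frak{q})$ also has a linear resolution by hypothesis, so by Proposition \ref{P.3} (for embedded primes of height $3$) or the argument from the $\height(I)=4$ case (for $\frak{m}$ itself), each prime-power factor $\frak{q}^{d_\frak{q}}$ is controlled by a Veronese type ideal. One then verifies the polymatroidal exchange condition directly on generators $u,v\in G(I)$: given $\deg_{x_i}(u)>\deg_{x_i}(v)$, one selects an index $j$ with $\deg_{x_j}(u)<\deg_{x_j}(v)$ and checks that $x_j(u/x_i)$ still belongs to every prime power appearing in the intersection. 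The argument mirrors that used in the proof of the preceding theorem on $n-2$ pure powers, but without the simplifying presence of pure-power generators; this combinatorial bookkeeping across all prime-power components simultaneously, rather than any new conceptual ingredient, is the principal difficulty.
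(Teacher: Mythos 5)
Your setup coincides with the paper's: the forward direction via \cite[Theorem 3.2]{HRV}, the reduction to $\gcd(I)=1$, the intersection presentation from Theorem \ref{P.1}, $I=\frak{m}^d$ when $\height(I)=4$, and Proposition \ref{P.3} when $\height(I)=3$. The extra split of the height-two case into ``no embedded primes'' (handled by the earlier propositions) versus ``embedded primes present'' is harmless but buys you nothing, because the residual case is still the whole difficulty.

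And that residual case is where your proposal has a genuine gap: it defers exactly the content that constitutes the paper's proof. Saying that one ``selects an index $j$ with $\deg_{x_j}(u)<\deg_{x_j}(v)$ and checks that $x_j(u/x_i)$ still belongs to every prime power'' is not an argument, because for any single candidate $j$ this check can fail; the substance is to show the failures cannot occur for all candidates simultaneously. Concretely, the paper writes $I$ as an intersection over all six height-two primes, the four height-three primes and $\frak{m}$, records that each embedded height-three exponent equals $d-a_k$ (since $I(\frak{p}_{\{k\}})=I:x_k^{a_k}$ is generated in degree $d-a_k$), and encodes membership in $G(I)$ by ten linear inequalities in the exponents. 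When a candidate exchange $x_j(u/x_1)$ fails, only specific inequalities can be violated, and each violation forces an equality: either a sum equal to $d-a_k$, which gives $r_k=a_k\geq s_k$ and contradicts $r_k<s_k$, or an equality such as $r_1+r_3=d_4$ or $r_1+r_2=d_6$. These last are eliminated by a second mechanism your sketch does not mention: $I:x_k^{a_k}$ is a polymatroidal ideal in three variables (by the already-established three-variable case of the conjecture), so there is a generator $z=x_1^{t_1}x_2^{t_2}x_3^{t_3}$ dividing $u:x_k^{a_k}$, and summing the relevant inequalities for $z$ against the equalities for $u$ yields $r_1<t_1$, a contradiction. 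Without these two ingredients (the exponent identities $d_{\frak{q}}=d-a_k$ used to kill one family of failures, and the colon-ideal/divisor argument used to kill the other), the ``combinatorial bookkeeping'' you appeal to does not close, so the proposal as written is an outline whose decisive step is missing.
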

\begin{proof}
$(\Longrightarrow)$ immediately follows by \cite[Theorem 3.2]{HRV}.\\
$(\Longleftarrow)$. Since for all ${\frak{p}} \in \mathcal{P}(R)$ the ideal $I({\frak{p}})$ has a linear resolution, by Theorem \ref{P.1} $I={\frak{p}}_{1}^{d_1} \cap {\frak{p}}_{2}^{d_2} \cap ... \cap {\frak{p}}_{r}^{d_r}$, such that $d_i=\reg(I({\frak{p}}_i))=\deg(I({\frak{p}}_i))$. We can assume that $\height(I)\geq 2$. If $\height(I)\geq 3$, then by Proposition \ref{P.3}, $I$ is a Veronese type ideal. Thus we assume that $\height(I)=2$, and we have the following presentation:
\begin{align*}
 I&=(x_2,x_3)^{d_1} \cap (x_2,x_4)^{d_2} \cap (x_3,x_4)^{d_3} \cap (x_1,x_3)^{d_4} \cap (x_1,x_4)^{d_5} \cap (x_1,x_2)^{d_6} \\
 &  \quad \cap (x_2,x_3,x_4)^{d_7} \cap (x_1,x_3,x_4)^{d_8} \cap (x_1,x_2,x_4)^{d_9} \cap (x_1,x_2,x_3)^{d_{10}} \\
 & \quad \cap {\frak{m}}^{d_{11}}.
\end{align*}
 Since for all ${\frak{p}} \in \mathcal{P}(R)$ the ideal $I({{\frak{p}}})$ has a linear resolution, then
\[I({\frak{p}_{\lbrace 1 \rbrace}})=(x_2,x_3)^{d_1} \cap (x_2,x_4)^{d_2} \cap (x_3,x_4)^{d_3} \cap  (x_2,x_3,x_4)^{d_7}. \]
If $(x_2,x_3,x_4)\in\Ass(R/I)$, then $ d_7=\deg(I({\frak{p}_{\lbrace 1 \rbrace }}))=d-a_{1}$ otherwise $d_7=0$, similarly if $(x_1,x_3,x_4) \in\Ass(R/I)$, then $ d_8=\deg(I({\frak{p}_{\lbrace 2 \rbrace }}))=d-a_{2}$ otherwise $d_8=0$, if $(x_1,x_2,x_4) \in\Ass(R/I)$, then $ d_9=\deg(I({\frak{p}_{\lbrace 3 \rbrace }}))=d-a_{3}$ otherwise $d_9=0$, if $(x_1,x_2,x_3) \in\Ass(S/I)$, then $ d_{10}=\deg(I({\frak{p}_{\lbrace 4 \rbrace }}))=d-a_{4}$, otherwise $d_{10}=0$ and if ${\frak{m}} \in\Ass(R/I)$, then $d_{11}=d$ otherwise $d_{11}=0$.
Note that $u= x_{1}^{r_1}x_{2}^{r_2}x_{3}^{r_3}x_{4}^{r_4}  \in G(I)$ if and only if
$\deg(u)=d$ and we have the following inequalities:

\[\begin{array}{*{20}{c}}
{(1) \quad ~~~ r_2 +r_3  \geq d_1}&{(5) \quad ~~~ r_1 +r_3  \geq d_4}&{(8) \quad ~~~ r_1 +r_2  \geq d_6}\\
{(2) \quad ~~~ r_2 +r_4  \geq d_2}&{(6) \quad ~~~ r_1 +r_4  \geq d_5}&{(9)r_1 +r_2 +r_4 \geq d_9}\\
{(3) \quad ~~~ r_3 +r_4  \geq d_3}&{(7)r_1 +r_3 +r_4 \geq d_8}&{(10)r_1 +r_2 +r_3 \geq d_{10}}.\\
{(4)r_2 +r_3+r_4  \geq d_7}&{}&{}
\end{array}\]
 \\
Let $u= x_{1}^{r_1}x_{2}^{r_2}x_{3}^{r_3}x_{4}^{r_4}, v= x_{1}^{s_1}x_{2}^{s_2}x_{3}^{s_3}x_{4}^{s_4} \in G(I) $ with $\deg_{x_i}(u)>\deg_{x_i}(v)$. We want to show that there exists variable $x_j$ such that $\deg_{x_j}(u) <\deg_{x_j}(v)$ and $x_{j}(u/x_{i}) \in I$. Without loss of generality, we assume that $r_{1} > s_{1}$. There are three main cases.\\
\textbf{ Case 1:} If $r_{2} < s_{2}$, $r_{3} \geq s_{3}$ and $r_{4} \geq s_{4}$, then $x_2(u/x_1)$ satisfies in all inequalities except $(5)$, $(6)$ and $(7)$. If $r_1+r_3 >d_4$, $r_1+r_4 >d_5$ and $r_1+r_3+r_4 > d_8$, then $x_2(u/x_1) \in I$ since none of the left-hand sides of the inequalities above change. Indeed, if $ r_1+r_3 \not  > d_4$, then
 \[r_1+r_3=d_4 \leq s_1+s_3\]
 that this is a contradiction since $r_1 > s_1$ and $r_3 \geq s_3$. Similarly, $r_1+r_4 > d_5$ and also $r_1+r_3+r_4 > d_8$. Therefore $x_2(u/x_1) \in I$.
\\
\textbf{Case 2:} Suppose $r_{2} < s_{2}$, $r_{3} < s_{3}$ and $r_{4} \geq s_{4}$. Without loss of generality, we assume that $x_3(u/x_1) \notin I$, we claim that $x_2(u/x_1) \in I$. If $r_4=a_4$ then $w=u:x_{4}^{a_4}=x_{1}^{r_1}x_{2}^{r_2}x_{3}^{r_3} \in G(I:x_{4}^{a_4})$. Since $v:x_{4}^{a_4}= x_{1}^{s_1}x_{2}^{s_2}x_{3}^{s_3} \in I:x_{4}^{a_4}$, there exist  $z=x_{1}^{t_1}x_{2}^{t_2}x_{3}^{t_3} \in G(I:x_{4}^{a_4}) $ such that $z \vert v:x_{4}^{a_4}$. Since $I:x_{4}^{a_4}$  is polymatroidal, $r_1> s_1 \geq t_1$, $\deg(z)=\deg(w)$ and $x_3(u/x_1) \notin I$, it follows $x_2(w/x_1) \in I:x_{4}^{a_4}$ and so $x_2(u/x_1) \in I$. Now, we assume that $r_4 < a_4$ and we will prove the claim.

Since $x_3(u/x_1)$ satisfies in all inequalities except $(6)$, $(8)$ and $(9)$, it follows $r_1+r_4=d_5$ or $r_1+r_2=d_6$ or $r_1+r_2+r_4=d-a_3$. If $r_1+r_4 \not > d_5$, then \[r_1+r_4=d_5 \leq s_1+s_4\]
 and this is a contradiction because by our hypothesis $r_1 > s_1$ and $r_4 \geq s_4$. If $ r_1+r_2+r_4 =d-a_3$, then $r_3=a_3 \geq s_3$ and this is a contradiction with $r_3 < s_3$. Hence if $x_3(u/x_1)\notin I$, then we have $r_1+r_2=d_6$. In order to prove the claim, we assume by contrary $x_2(u/x_1) \notin I$. The monomial $x_2(u/x_1)$ satisfies in all inequalities except $(5)$, $(6)$ and $(7)$. If $r_1 +r_3  > d_4$, $r_1 +r_4  > d_5$ and $r_1 +r_3 +r_4 > d-a_2$, then $x_2(u/x_1) \in I$ because all inequalities holds. Since $r_1 > s_1$ and $r_4 \geq s_4$ then we have $r_1+r_4 > s_1+s_4 \geq d_5$ and so we have $(6)$. If $r_1 +r_3 +r_4 = d-a_2$, then $r_2=a_2 \geq s_2$ and this is a contradiction and so we have $(7)$. Hence if $x_2(u/x_1) \notin I$, then we have $r_1+r_3 = d_4$.
 Since $u: x_{4}^{a_4} = x_{1}^{r_1} x_{2}^{r_2} x_{3}^{r_3} \in I:x_{4}^{a_4}$, there exists $z=x_{1}^{t_1}x_{2}^{t_2}x_{3}^{t_3} \in G(I:x_{4}^{a_4}) $ such that $z \vert u:x_{4}^{a_4}$ and $t_1+t_2 \geq d_6$, $t_1+t_3 \geq d_4 $ and $t_1+t_2+t_3=d-a_4$. Therefore we have
\[ r_1+d-a_4 < r_1+r_1+r_2+r_3=d_6+d_4\leq t_1+t_1+t_2+t_3=t_1+d-a_4\] and so $r_1 < t_1$ which is a contradiction. Hence $x_2(u/x_1)\in I$.\\
\textbf{Case 3:} $r_i < s_i$ for $i=2,3,4$.  Without loss of generality, we assume that $x_4(u/x_1) \notin I $. Then we have $r_1+r_2 = d_6$ or $r_1+r_3=d_4$. Also, we assume that $x_3(u/x_1) \notin I $. Therefore we have $r_1+r_2 = d_6$ or $r_1+r_4=d_5$. Now, we claim that $x_2(u/x_1) \in I$.  In order to prove the claim, we assume by contrary that $x_2(u/x_1) \notin I$. The monomial $x_2(u/x_1)$ satisfies in all inequalities except $(5)$, $(6)$ and $(7)$.
 If $r_1 +r_3  > d_4$, $r_1 +r_4  > d_5$ and $r_1 +r_3 +r_4 > d-a_2$, then $x_2(u/x_1) \in I$ because all inequalities hold. If $x_2(u/x_1) \notin I$, then we have $r_1+r_3 = d_4$  or $r_1+r_4=d_5$.
 Thus we can consider the following cases:
 \begin{enumerate}
 \item[(A)] $r_1+r_3=d_4$ and $r_1+r_4=d_5$,
 \item[(B)] $r_1+r_3=d_4$ and $r_1+r_2=d_6$,
 \item[(C)] $r_1+r_4=d_4$ and $r_1+r_2=d_6$.
 \end{enumerate}
\noindent
{\bf Case}(A): Since $r_2 < s_2 \leq a_2$, then $r_1+r_3+r_4 > d-a_2$ and there exist $z=x_{1}^{t_1}x_{3}^{t_3} x_{4}^{t_4} \in G(I:x_{2}^{a_2}) $ such that $z \vert u:x_{2}^{a_2}$ and $t_1+t_3 \geq d_4$, $t_1+t_4 \geq d_5 $ and $t_1+t_3+t_4=d-a_2$. Therefore we have
\[ r_1+d-a_2 < r_1+r_1+r_3+r_4=d_6+d_5\leq t_1+t_1+t_3+t_4=t_1+d-a_2\] and so $r_1 < t_1$ which is a contradiction.\\
{\bf Case}(B): By using a similar proof as in Case 2 we have $x_2(u/x_1)\in I$.\\
{\bf Case}(C): This follows by a similar argument of {\bf Case}(2) and {\bf Case}(A). This completes the proof.
\end{proof}

 \begin{Theorem}\label{P.9}
Let $I \subseteq R=K[x_1, . . .  , x_n]$ be a monomial ideal generated in degree $d$, and suppose that $I$ contains $n-3$ pure powers of the variables, say $x_{1}^d , . . .  , x_{n-3}^d$. Then $I$ is a polymatroidal ideal if and only if $I({ {\frak{p}}})$ has a linear resolution for all ${\frak{p}} \in \mathcal{P}(R)$.
 \end{Theorem}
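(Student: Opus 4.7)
The plan is to follow the blueprint of Proposition \ref{P.7}, exploiting the fact that the pure powers $x_1^d,\ldots,x_{n-3}^d\in I$ force the lattice of associated primes of $I$ to be isomorphic to the one appearing in the $4$-variable setting of that proposition.

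The forward direction is immediate from \cite[Theorem 3.2]{HRV}. For the converse, I apply Theorem \ref{P.1} to write $I=\bigcap_{\frak{p}\in\Ass(R/I)}\frak{p}^{d_\frak{p}}$ with $d_\frak{p}=\deg(I(\frak{p}))$. Since $x_i^d\in I$ for every $i\leq n-3$, each minimal (hence each associated) prime of $I$ must contain $(x_1,\ldots,x_{n-3})$, and in particular $\height(I)\geq n-3$. If $\height(I)\geq n-1$, then either $I=\frak{m}^d$ or Proposition \ref{P.3} applies and gives that $I$ is a Veronese type ideal; in either case $I$ is polymatroidal. So I may assume $\height(I)\in\{n-3,n-2\}$, in which case every associated prime of $I$ has the form $\frak{p}_A$ with $A\subseteq\{n-2,n-1,n\}$, and consequently
\begin{align*}
I &= (x_1,\ldots,x_{n-3})^{d_0}\cap(x_1,\ldots,x_{n-3},x_{n-2})^{d_1}\cap(x_1,\ldots,x_{n-3},x_{n-1})^{d_2}\\
  &\quad\cap(x_1,\ldots,x_{n-3},x_n)^{d_3}\cap(x_1,\ldots,x_{n-3},x_{n-2},x_{n-1})^{d_4}\\
  &\quad\cap(x_1,\ldots,x_{n-3},x_{n-2},x_n)^{d_5}\cap(x_1,\ldots,x_{n-3},x_{n-1},x_n)^{d_6}\cap\frak{m}^{d_7},
\end{align*}
where some of the exponents may be zero.

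Writing $S(r):=r_1+\cdots+r_{n-3}$, membership $u=x_1^{r_1}\cdots x_n^{r_n}\in I$ becomes $\sum_i r_i=d$ together with a system of linear inequalities in $S(r),r_{n-2},r_{n-1},r_n$ that reproduces, term for term, the inequalities (1)--(10) appearing in the proof of Proposition \ref{P.7} under the identification $r_1\leftrightarrow S(r)$ and $s_1\leftrightarrow S(s)$. To verify the polymatroidal exchange property for $u,v\in G(I)$ with $\deg_{x_i}(u)>\deg_{x_i}(v)$, I first dispose of the easy sub-case: if $i\in\{1,\ldots,n-3\}$ and there is some $j\in\{1,\ldots,n-3\}$ with $r_j<s_j$, then $x_j(u/x_i)$ preserves $S(r)$ and every $r_{n-2},r_{n-1},r_n$, hence every inequality, and therefore lies in $I$. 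In every other configuration -- either $i\in\{n-2,n-1,n\}$, or $i\in\{1,\ldots,n-3\}$ with the compensating index $j\in\{n-2,n-1,n\}$ -- I transcribe the corresponding case of Proposition \ref{P.7} essentially verbatim, reading $S(r)$ and $S(s)$ everywhere for $r_1$ and $s_1$.

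The main obstacle is justifying the appeals to polymatroidality of the auxiliary ideals $I:x_j^{a_j}$ for $j\in\{n-2,n-1,n\}$ that drive Cases 2 and 3 of Proposition \ref{P.7}'s argument. Each such colon ideal has generators free of $x_j$, so it effectively lives in $n-1$ variables, and it still contains $x_1^d,\ldots,x_{n-3}^d$, i.e.\ $(n-1)-2$ pure powers of the remaining variables. Theorem \ref{P.8} therefore applies and delivers polymatroidality of $I:x_j^{a_j}$, closing the exchange arguments and completing the proof.
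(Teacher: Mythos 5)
Your overall route is exactly the paper's: the Theorem \ref{P.1} decomposition, reduction via Proposition \ref{P.3} when $\height(I)\geq n-1$, the eight-term presentation, an exchange-property check against the resulting linear inequalities with the block sum $S(r)=r_1+\cdots+r_{n-3}$ playing the role of a single exponent, and Theorem \ref{P.8} to get polymatroidality of $I:x_j^{a_j}$ for $j\in\{n-2,n-1,n\}$ (your justification of the latter is fine once you normalize degrees: $I:x_j^{a_j}$ has a linear resolution, hence is generated in the single degree $d-a_j$, and since $x_i^{d}$ lies in it, $x_i^{d-a_j}$ must be among its generators for $i\leq n-3$, which is what Theorem \ref{P.8} actually requires). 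The gap is in the claim that the membership system ``reproduces, term for term'' inequalities (1)--(10) of Proposition \ref{P.7} under $r_1\leftrightarrow S(r)$, and in the transcription plan built on it. Since every associated prime now contains $x_1,\dots,x_{n-3}$, the correct system consists only of the seven inequalities attached to primes containing the block, plus the degree equation: it has an extra inequality $S(r)\geq d_1$ coming from $(x_1,\dots,x_{n-3})$ itself, with no counterpart in \ref{P.7}, and it has no analogues of (1)--(4) of \ref{P.7}. The extra inequality is at risk under every move $x_j(u/x_i)$ with $i\leq n-3$ and $j\in\{n-2,n-1,n\}$, so the transcribed cases must also check it; this is harmless ($S(r)-1\geq S(s)\geq d_1$ because in those configurations $r_i>s_i$ and $r_k\geq s_k$ for the other block indices), but it is not verbatim.

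The more serious consequence of the mismatch is the loss of symmetry. Proposition \ref{P.7} covers the configurations in which the exceeding variable is $x_2,x_3$ or $x_4$ only through its ``without loss of generality $r_1>s_1$'' reduction, legitimate there because its eleven-term decomposition is symmetric in all four variables; the present eight-term decomposition is not symmetric under exchanging the block with $\{x_{n-2},x_{n-1},x_n\}$. Hence when $\deg_{x_i}(u)>\deg_{x_i}(v)$ for some $i\in\{n-2,n-1,n\}$ and every compensating index also lies in $\{n-2,n-1,n\}$, there is no case of \ref{P.7} for you to transcribe, and your proposal never addresses this configuration. It does need an argument (it is the paper's ``Second case''), though an easy one: say $r_{n-2}>s_{n-2}$, $r_k\geq s_k$ for $k\leq n-3$, and (after swapping $x_{n-1},x_n$ if needed) $r_{n-1}<s_{n-1}$; then $x_{n-1}(u/x_{n-2})$ can only violate the inequalities coming from $(x_1,\dots,x_{n-2})$ and $(x_1,\dots,x_{n-2},x_n)$, the first survives by comparison with $v$ (using $r_{n-2}-1\geq s_{n-2}$), and the second survives because its exponent is $0$ or $d-a_{n-1}$, so failure would force $r_{n-1}=a_{n-1}\geq s_{n-1}$, contradicting $r_{n-1}<s_{n-1}$. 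With these two repairs your argument coincides with the paper's proof.
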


\begin{proof}
$(\Longrightarrow)$ immediately follows by \cite[Theorem 3.2]{HRV}.\\
$(\Longleftarrow)$. Since for all ${\frak{p}} \in \mathcal{P}(R)$ the ideal $I({\frak{p}})$ has a linear resolution, by Theorem \ref{P.1} we have $I={\frak{p}}_{1}^{d_1} \cap {\frak{p}}_{2}^{d_2} \cap . . . \cap {\frak{p}}_{r}^{d_r}$, such that $d_i=\reg(I({\frak{p}}_i))=\deg(I({\frak{p}}_i))$. Since $x_{1}^d , . . .  , x_{n-3}^d \in I$, then $\height(I)\geq n-3$, and we have the following presentation:
\begin{align*}
 I&=(x_1, . . . ,x_{n-3})^{d_1} \cap (x_1, . . . , x_{n-3},x_{n-2})^{d_2} \cap (x_1, . . . , x_{n-3},x_{n-1})^{d_3} \cap (x_1, . . . , x_{n-3},x_{n})^{d_4} \\
 &  \quad  \cap (x_1, . . . , x_{n-3},x_{n-2},x_{n-1})^{d_5} \cap (x_1, . . . , x_{n-3},x_{n-2},x_{n})^{d_6} \cap (x_1, . . . , x_{n-3},x_{n-1},x_{n})^{d_7}\\
  &  \quad  \cap {\frak{m}}^{d_8} .
  \end{align*}
Since for all ${\frak{p}} \in \mathcal{P}(R)$ the ideal $I({ {\frak{p}}})$ has a linear resolution, then
\[I({\frak{p}_{\lbrace n-2 \rbrace}})=(x_1, . . ., x_{n-3})^{d_1} \cap  (x_1, . . . , x_{n-3},x_{n-1})^{d_3} \cap  (x_1, . . . , x_{n-3},x_{n})^{d_4} \cap  (x_1, . . . , x_{n-3},x_{n-1},x_{n})^{d_7}.\]
Therefore if $(x_1, . . . , x_{n-3},x_{n-1},x_{n}) \in\Ass(R/I)$, then $ d_7=\deg(I({\frak{p}_{\lbrace n-2 \rbrace }}))=d-a_{n-2}$ otherwise $d_{7}=0$,  similarly if $(x_1, . . . ,x_{n-3}, x_{n-2},x_{n}) \in\Ass(R/I)$, then $d_6=\deg(I({\frak{p}_{\lbrace n-1 \rbrace }}))=d-a_{n-1}$  otherwise $d_{6}=0$, if $(x_1, . . . ,x_{n-3}, x_{n-2},x_{n-1}) \in\Ass(R/I)$, then $d_5=\deg(I({\frak{p}_{\lbrace n \rbrace }}))=d-a_{n}$  otherwise $d_{5}=0$ and if ${\frak{m}} \in\Ass(R/I)$, then $d_{8}=d$ , otherwise $d_{8}=0$.
Note that $u= x_{1}^{r_1} . . . x_{n}^{r_{n}} \in G(I)$ if and only if
$\deg(u) = d$ and all of the following inequalities hold:
\[\begin{array}{*{20}{c}}
{(1) \quad \quad \quad \quad  r_1 + . . .  +r_{n-3} \geq d_1}&{(5) \quad \quad    r_1 + . . .  +r_{n-3}+r_{n-2}+r_{n-1} \geq d_5} \\
{(2) \quad  r_1 + . . .  +r_{n-3}+r_{n-2} \geq d_2}&{(6) \quad \quad \quad   r_1 + . . .  +r_{n-3}+r_{n-2}+r_{n} \geq d_6}\\
{(3) \quad   r_1 + . . .  +r_{n-3}+r_{n-1} \geq d_3}&{(7) \quad \quad \quad  r_1 + . . .  +r_{n-3}+r_{n-1}+r_{n} \geq d_7}\\
{(4) \quad \quad  r_1 + . . .  +r_{n-3}+r_{n} \geq d_4}&{(8) r_1 + . . .  +r_{n-3}+r_{n-2}+r_{n-1}+r_{n}=d. }
\end{array}\]
Let $u= x_{1}^{r_1} . . . x_{n}^{r_{n}}, v= x_{1}^{s_1} . . . x_{n}^{s_{n}} \in G(I) $ with $\deg_{x_i}(u) >\deg_{x_i}(v)$. We want to show that there exists a variable $x_j$ such that $\deg_{x_j}(u) <\deg_{x_j}(v)$ and $x_{j}(u/x_{i}) \in I$. There are two main cases.\\
{\bf First case}:  Suppose that $r_{i} > s_{i}$ for some $i= 1, . . . ,n-3$. Without loss of generality, we assume that $r_{1} > s_{1}$. If there exists $r_{j} < s_{j}$ for some $j= 2 , . . . , n-3$, then $x_{j}(u/x_1) \in I$ since none of the left-hand sides of the inequalities above
 change, and so we have the result. Hence we assume that $r_{j} \geq s_{j}$ for all $ 2 \leq j \leq n-3$. Since $u$ and $v$ have the same degree, there exists some $r_j < s_j$  with $n-2 \leq j \leq n$. So we have the following three cases.\\
\textbf{ Case 1:} If $r_{n-2} < s_{n-2}$, $r_{n-1} \geq s_{n-1}$ and $r_{n} \geq s_{n}$, then $x_{n-2}(u/x_1)$ satisfies in all inequalities except $(3)$, $(4)$ and $(7)$. If $r_1 + . . .  +r_{n-3}+r_{n-1} > d_3$, $ r_1 + . . .  +r_{n-3}+r_{n} > d_4$ and $ r_1 + . . .  +r_{n-3}+r_{n-1}+r_{n} > d_7$, then $x_{n-2}(u/x_1) \in I$  since none of the left-hand sides of the inequalities above change. If $r_1 + . . .  +r_{n-3}+r_{n-1} \not > d_3$, then
 \[r_1 + . . .  +r_{n-3}+r_{n-1} = d_3 \leq s_1 + . . .  +s_{n-3}+s_{n-1} \]
 and this is a contradiction because by our hypothesis $r_1 > s_1$, $r_j \geq s_j$ for $j=2, . . . ,n-3$ and $r_{n-1} \geq s_{n-1}$. Similarly $ r_1 + . . .  +r_{n-3}+r_{n} > d_4$ and $r_1 + . . .  +r_{n-3}+r_{n-1}+r_{n} > d_7$ and so $x_{n-2}(u/x_1) \in I$.
\\
\textbf{Case 2:} Suppose $r_{n-2} < s_{n-2}$, $r_{n-1} < s_{n-1}$ and $r_{n} \geq s_{n}$. Without loss of generality, we assume that $x_{n-1}(u/x_1) \notin I$. Now, we claim that $x_{n-2}(u/x_1) \in I$. If $r_{n}=a_{n}$, then $w=u:x_{n}^{a_n}=x_{1}^{r_1}x_{2}^{r_2}~ . . . ~ x_{n-1}^{r_{n-1}} \in G(I:x_{n}^{a_n})$. Since $v:x_{n}^{a_n}= x_{1}^{s_1}x_{2}^{s_2} ~. . .~ x_{n-1}^{s_{n-1}} \in I:x_{n}^{a_n}$, there exists $z=x_{1}^{t_1}x_{2}^{t_2}~ . . .~ x_{n-1}^{t_{n-1}} \in G(I:x_{n}^{a_n}) $ such that $z \vert v:x_{n}^{a_n}$. By Theorem \ref{P.8}, $I:x_{n}^{a_n}$  is polymatroidal. Since $r_1> s_1 \geq t_1$, $\deg(z)=\deg(w)$ and $x_{n-1}(u/x_1) \notin I$ then we have $x_{n-2}(w/x_1) \in I:x_{n}^{a_n}$ and so $x_{n-2}(u/x_1) \in I$. Thus we assume that $r_{n} < a_n$ and we want to prove the claim.
 Since $x_{n-1}(u/x_1)$ satisfies in all inequalities except $(2)$, $(4)$ and $(6)$, we have $ r_1 + . . .  +r_{n-3}+r_{n-2} = d_2$ or $r_1 + . . .  +r_{n-3}+r_{n} = d_4$ or $ r_1 + . . .  +r_{n-3}+r_{n-2}+r_{n} = d-{a_{n-1}}$. If $r_1 + . . .  +r_{n-2}+r_{n} \not > d_4$, then \[r_1 + . . .  +r_{n-3}+r_{n}=d_4 \leq s_1 + . . .  +s_{n-3}+s_{n}\]
 and this is a contradiction because by our hypothesis $r_1 > s_1$, $r_j \geq s_j$ for $j=2, . . . ,n-3$ and $r_{n} \geq s_{n}$. If $ r_1 + . . .  +r_{n-3}+r_{n-2}+r_{n} = d-{a_{n-1}}$, then $r_{n-1}=a_{n-1} \geq s_{n-1}$ and this is a contradiction with $r_{n-1} < s_{n-1}$. Therefore if $x_{n-1}(u/x_1) \notin I$ we have $r_1 + . . .  +r_{n-3}+r_{n-2} = d_2$. In order to prove the claim, we assume by contrary that $x_{n-2}(u/x_1) \notin I$. The monomial $x_{n-2}(u/x_1)$ satisfies in all inequalities except $(3)$, $(4)$ and $(7)$. If $ r_1 + . . .  +r_{n-3}+r_{n-1} > d_3$, $r_1 + . . .  +r_{n-3}+r_{n} > d_4$ and $ r_1 + . . .  +r_{n-3}+r_{n-1}+r_{n} > d_7$, then $x_{n-2}(u/x_1) \in I$ because all inequalities hold. Since $r_1 > s_1$, $r_j \geq s_j$ for $j=2, . . . ,n-3$ and $r_{n} \geq s_{n}$ then $r_1 + . . .  +r_{n-3}+r_{n} > s_1 + . . .  +s_{n-3}+s_{n} \geq d_4$. If $ r_1 + . . .  +r_{n-3}+r_{n-1}+r_{n} = d-{a_{n-2}}$, then $r_{n-2}=a_{n-2} \geq s_{n-2}$ and this is a contradiction. Hence if $x_{n-2}(u/x_1) \notin I$, we have $r_1 + . . .  +r_{n-3}+r_{n-1} = d_3$.
 Since $u: x_{n}^{a_n} = x_{1}^{r_1} x_{2}^{r_2} ~. . . ~ x_{n-1}^{r_{n-1}} \in I$, there exists $z=x_{1}^{t_1}x_{2}^{t_2}~ . . .~ x_{n-1}^{t_{n-1}} \in G(I:x_{n}^{a_n}) $ such that $z \vert u:x_{n}^{a_n}$ and $t_1 + . . .  +t_{n-3}+t_{n-2} \geq d_2$, $t_1 + . . .  +t_{n-3}+t_{n-1} \geq d_3 $ and $t_1 + . . .  +t_{n-3}+t_{n-2}+t_{n-1} = d-{a_n}$. Therefore we have
 \begin{align*}
 r_1+. . . +r_{n-3}+d-a_n &< r_1+ . . . +r_{n-3}+r_1+ . . . +r_{n-3}+r_{n-2}+r_{n-1}\\
 &=d_2+d_3\\
 &\leq t_1+ . . . +t_{n-3}+t_1+ . . . +t_{n-3}+t_{n-2}+t_{n-1}\\
 &=t_1+. . . +t_{n-3}+d-a_n.
 \end{align*}
 Thus $r_1+. . . +r_{n-3} < t_1+. . . +t_{n-3}$ and this is a contradiction. Thus $x_{n-2}(u/x_1) \in I$.\\
\textbf{Case 3:} Suppose $r_i < s_i$ for $i=n-2,n-1,n$.  Without loss of generality, we assume that $x_n(u/x_1) \notin I $ then we have $r_1 + . . .  +r_{n-3}+r_{n-2} = d_2$ or $ r_1 + . . .  +r_{n-3}+r_{n-1} = d_3$. Also we assume that $x_{n-1}(u/x_1) \notin I $ then we have $r_1 + . . .  +r_{n-3}+r_{n-2} = d_2$ or $r_1 + . . .  +r_{n-3}+r_{n} = d_4$. Now we claim that $x_{n-2}(u/x_1) \in I$.  In order to prove the claim, we assume by contrary that $x_{n-2}(u/x_1) \notin I$. The monomial $x_{n-2}(u/x_1)$ satisfies in all inequalities except $(3)$, $(4)$ and $(7)$.
 If $ r_1 + . . .  +r_{n-3}+r_{n-1} > d_3$, $r_1 + . . .  +r_{n-3}+r_{n} > d_4$, then $x_{n-2}(u/x_1) \in I$ because all inequalities hold. Hence if $x_{n-2}(u/x_1) \notin I$, we have $r_1 + . . .  +r_{n-3}+r_{n-1} = d_3$  or $r_1 + . . .  +r_{n-3}+r_{n} = d_4$.
 Now we consider the following cases:
 \begin{enumerate}
 \item[(A)] $r_1 + . . .  +r_{n-3}+r_{n-1} = d_3$ and $r_1 + . . .  +r_{n-3}+r_{n} = d_4$,
 \item[(B)] $r_1 + . . .  +r_{n-3}+r_{n-1} = d_3$ and $r_1 + . . .  +r_{n-3}+r_{n-2} = d_2$,
 \item[(C)] $r_1 + . . .  +r_{n-3}+r_{n} = d_4$ \quad and $r_1 + . . .  +r_{n-3}+r_{n-2} = d_2$ .
 \end{enumerate}
\noindent
$(A)$: Since $r_{n-2} < s_{n-2} \leq a_{n-2}$, then $ r_1 + . . .  +r_{n-3}+r_{n-1}+r_{n} > d-{a_{n-2}}$ and there exist $z=x_{1}^{t_1}~ . . . ~ x_{n-3}^{t_{n-3}}x_{n-1}^{t_{n-1}} x_{n}^{t_n} \in G(I:x_{n-2}^{a_{n-2}}) $ such that $z \vert u:x_{n-2}^{a_{n2}}$ and $t_1 + . . .  +t_{n-3}+t_{n-1} \geq d_3$, $t_1 + . . .  +t_{n-3}+t_{n} \geq d_4$  and $ t_1 + . . .  +t_{n-3}+t_{n-1}+t_{n} =d-{a_{n-2}}$. Thus we have
 \begin{align*}
 r_1+ . . . +r_{n-3}+d-a_{n-2} &< r_1+ . . . +r_{n-3}+r_1+ . . . +r_{n-3}+r_{n-1}+r_{n}\\
 &=d_3+d_4\\
 &\leq  t_1+ . . . +t_{n-3}+t_1+ . . . +t_{n-3}+t_{n-1}+t_{n}\\
 &= t_1+ . . . +t_{n-3}+d-a_{n-2}.
\end{align*}
 Thus $r_1+. . . +r_{n-3} < t_1+. . . +t_{n-3}$ and this is a contradiction. Hence $x_{n-2}(u/x_1) \in I$.\\
$(B)$: By similar proof as in the Case 2, we have $x_{n-2}(u/x_1) \in I$.\\
$(C)$: This follows by a similar argument of the Case 2 and the part (A).\\

{\bf Second case:} Consider $r_i > s_i $ for some $i=n-2,n-1,n$. Without loss of generality, we assume that $r_{n-2} > s_{n-2}$. If there exists $r_{j} < s_{j}$ for some $j= 1 , . . . , n-3$, then $x_{j}(u/x_{n-1}) \in I$. Since none of the left-hand sides of the inequalities above change we have the result. Otherwise, we have $r_{j} \geq s_{j}$ for all $j= 1, . . . , n-3$. Since $u$ and $v$ have the same degree, there exists some $r_j < s_j$  with $n-1 \leq j \leq n$. Without loss of generality, we assume that $r_{n-1} > s_{n-1}$. The monomial $x_{n-1}(u/x_{n-2})$ satisfies in all inequalities
except $(2)$. If $r_1 + . . .  +r_{n-3}+r_{n-2} > d_2$, then $x_{n-1}(u/x_{n-2}) \in I$, since none of the left-hand sides of the inequalities above change. If $r_1 + . . .  +r_{n-3}+r_{n-2} \not > d_2$, then
 \[r_1 + . . .  +r_{n-3}+r_{n-2} = d_2 \leq s_1 + . . .  +s_{n-3}+s_{n-2} \]
 which is a contradiction because by our hypothesis $r_{n-2} \geq s_{n-2}$ and $r_j \geq s_j$ for $j=2, . . . ,n-3$. This completes the proof.
\end{proof}

Sturmfels in \cite{S} by an example showed that there exists a monomial ideal $I$ generated in degree $3$ such that $I$ has a linear resolution, while $I^2$ has no linear resolution (see also \cite{C}). After that Herzog, Hibi and Zheng \cite{HHZ} proved that a monomial ideal $I$ generated in degree $2$ has a linear resolution if and only if each power of $I$ has a linear resolution.\\

Thus the following natural question arises:

\begin{Question}
Let $I$ be a monomial ideal of $R$ generated in degree $d$. If $I^i$ has a linear resolution for all $i=1,2,...,d-1$, then is it always true that $I^i$ has a linear resolution for all $i$?
\end{Question}

By the following example we give a counterexample for Question 1.21.
\begin{Example}
Let $R=K[a,b,c,d,e,f,g]$ be the polynomial ring over a field $K$ and
\[I=(ace, acf, acg, ade, bcd, bfg, cde, cdf, cdg, cef, ceg, cfg, def, deg, dfg, efg) \]
be a monomial ideal of $R$. By using Macaulay2 \cite{GS}, we have $\reg(I)=3 ,\reg(I^2)=6$ but $\reg(I^3)=10$. Thus $I$ and $I^2$ have a linear resolution, while $I^3$ has no a linear resolution.
\end{Example}

Conca and Herzog in \cite{CH} proved that if $I$ and $J$ are two polymatroidal monomial ideals of $R$, then $IJ$ is polymatroidal.

It is natural to ask the following question:

\begin{Question}
If $I$ and $J$ are two monomial ideal of $R$ such that $IJ$ is polymatroidal, then is it always true that $I$ and $J$ are polymatroidal?
\end{Question}

In the sequel we give two counterexamples for Question 1.23.

\begin{Example}
Let $R=K[x_1,x_2]$ be the polynomial ring over a field $K$, $I=(x_1,x_2)$ and $J=(x_1^2,x_2^2)$. Then $IJ$ is polymatroidal while $J$ is not polymatroidal.
\end{Example}

\begin{Example}
Let $R=K[x_1,x_2,x_3]$ be the polynomial ring over a field $K$, $I=(x_1^2,x_2^2,x_1x_3,x_2x_3)$ and $J=(x_1^2,x_3^2,x_1x_2,x_2x_3)$. Then $IJ$ is polymatroidal while $I$ and $J$ are not polymatroidal.
\end{Example}



\end{document}